\documentclass[11pt]{amsart}

\usepackage[T1]{fontenc}
\usepackage[utf8]{inputenc}
\usepackage{lmodern}
\usepackage{amsmath,amssymb,amsthm,mathtools}
\usepackage{booktabs}
\usepackage{enumitem}
\usepackage{graphicx}
\usepackage{microtype}
\usepackage{xcolor}
\usepackage[colorlinks=true,linkcolor=blue,citecolor=blue,urlcolor=blue]{hyperref}
\usepackage{orcidlink}
\usepackage[nameinlink,capitalize]{cleveref}

\setlist[itemize]{leftmargin=2em}
\setlist[enumerate]{leftmargin=2em}

\newtheorem{theorem}{Theorem}
\newtheorem{proposition}{Proposition}
\newtheorem{lemma}{Lemma}

\theoremstyle{definition}

\newtheorem{protocol}{Protocol}

\theoremstyle{remark}

\DeclareMathOperator{\E}{\mathbb E}

\DeclareMathOperator{\SE}{SE}

\newcommand{\Bn}{B_n}
\newcommand{\Om}{\Omega}

\newcommand{\calE}{\mathcal E}

\newcommand{\wideM}{\widehat M}
\newcommand{\widea}{\widehat a}
\newcommand{\wideA}{\widehat A}
\newcommand{\wideR}{\widehat R}

\newcommand{\dedekindaffiliations}{%
  \begin{minipage}{0.94\textwidth}
  \centering
  \textsuperscript{1}Tsung-Dao Lee Institute, Shanghai Jiao-Tong University, Shanghai 201210, Mainland China\\
  \textsuperscript{2}Department of Physics and Institute for Quantum Science
  and Technology, Shanghai University, Shanghai 200444, Mainland China\\
  \textsuperscript{3}School of Economics, Shanghai University, Shanghai 200444, Mainland China\\
  \textsuperscript{4}Department of Physics, Tamkang University, New Taipei 251, Taiwan\\
  \textsuperscript{5}Physics Division, National Center for Theoretical Sciences, Taipei 10617, Taiwan\\
  \textsuperscript{6}Shanghai Key Laboratory of High Temperature
  Superconductors, Shanghai University, Shanghai 200444, Mainland China
  \par\vspace{0.45em}
  E-mails:
  \href{mailto:cts2003912@shu.edu.cn}{cts2003912@shu.edu.cn},
  \href{mailto:fenghaozi@shu.edu.cn}{fenghaozi@shu.edu.cn},
  \href{mailto:wanghaozhe@shu.edu.cn}{wanghaozhe@shu.edu.cn},
  \href{mailto:chianshu@gmail.com}{chianshu@gmail.com}
  \href{mailto:kilar@shu.edu.cn}{kilar@shu.edu.cn}
  \end{minipage}%
}

\newcommand{\authororcid}[1]{%
  \texorpdfstring{\textsuperscript{\orcidlink{#1}}}{}%
}

\makeatletter
\def\@setauthors{%
  \begingroup
  \def\thanks{\protect\thanks@warning}%
  \trivlist
  \centering\normalsize \@topsep30\p@\relax
  \advance\@topsep by -\baselineskip
  \item\relax
  \author@andify\authors
  \def\\{\protect\linebreak}%
  \authors\par
  \vskip.7em
  {\scriptsize\dedekindaffiliations\par}%
  \endtrivlist
  \endgroup
}
\def\@setaddresses{}
\def\subsection{\@startsection{subsection}{2}%
  \z@{.5\linespacing\@plus.7\linespacing}{.35\linespacing}%
  {\normalfont\bfseries}}
\makeatother

\title[Layer-Ratio Estimate of Dedekind Numbers]{Finite-\(n\) Estimate of Dedekind Numbers by Layer-Ratio Monte Carlo}

\author[T.-S. Chen]{Tian-Shun Chen\authororcid{0009-0004-4744-4597}\textsuperscript{1,2}}

\author[H. Feng]{Hao Feng\authororcid{0009-0003-3388-1094}\textsuperscript{2}}

\author[H. Wang]{Haozhe Wang\authororcid{0009-0009-6252-5543}\textsuperscript{3}}

\author[C.-S. Chen]{Chian-Shu Chen\authororcid{0000-0002-3354-0105}\textsuperscript{4,5,*}}

\author[K. Zhang]{Kilar Zhang\authororcid{0000-0001-6776-9074}\textsuperscript{2,6,*}}
\thanks{*Corresponding authors.}

\begin{document}

\begin{abstract}
Dedekind's problem counts monotone Boolean functions, equivalently downsets of a Boolean lattice.  We recast this enumeration as a finite layer-ratio reconstruction problem for the Whitney numbers of the ranked ideal lattice.  An exact adjacent-layer double count expresses each layer ratio through local averages of the number of addable elements and the number of removable elements. Reversible fixed-layer Markov chains estimate these averages and hence estimate the Dedekind number \(M(n)\).  Backtests at \(M(8)\) and \(M(9)\) calibrate seed-level variability under the fixed protocol and measure the observed Monte Carlo budget scaling.  The resulting estimate probes the Whitney-number sequence of the ideal lattice.  Although these rows have previously been described empirically as unimodal, the high-precision \(n=9\) estimate has a shallow two-shoulder feature around the central rank, contrary to that empirical description; \(n=11\) and \(n=13\) center-window estimates show a larger-contrast analogous pattern.  The protocol estimate for \(M(10)\) is
\[
\widehat M(10)=(8.9360\pm0.0010)\times 10^{78},
\]
where the displayed uncertainty is the budget-based forecast scale from the cross-\(n\) scaling law under the production budget.
\end{abstract}

\maketitle

\clearpage
\tableofcontents

\section{Introduction}

Dedekind's problem asks for the number \(M(n)\) of monotone Boolean
functions on \(n\) variables.  Equivalently, \(M(n)\) equals each of the
following quantities: the number of antichains in the Boolean lattice
\(\Bn=\{0,1\}^n\) under the coordinatewise order; the number of downsets of
\(\Bn\); and the cardinality of the free distributive lattice on \(n\)
generators.  The problem goes back to Dedekind's 1897 work on free
distributive structures \cite{Dedekind1897}; the free-distributive-lattice
and early numerical literature attributes to Church, Ward, and Yamamoto
\cite{Church1940,Ward1946,Yamamoto1954}.  Its elementary formulation hides an
extreme computational difficulty: the known exact values currently stop at
\(n=9\).

The values up to \(n=7\) were obtained through a sequence of increasingly
refined enumerations
\cite{Church1940,Ward1946,Yamamoto1954}; Wiedemann computed \(M(8)\) in 1991
\cite{Wiedemann1991}, and later algorithms of Fidytek et al.\ gave an independent algorithmic
confirmation \cite{FidytekEtAl2001}.  Subsequent work developed recursive, interval, and
downset-enumeration approaches to the same antichain lattice
\cite{DeCausmaeckerDeWannemacker2014,BermanKoehler2021,Campo2022}.  A related
symmetry line counts inequivalent monotone Boolean functions and fixed points
of variable permutations, using Burnside-type reductions
\cite{StephenYusun2014,Pawelski2022,Szepietowski2022,Pawelski2024}.  More
than three decades after \(M(8)\) was obtained, \(M(9)\) was computed by two independent
projects.  Jaekel used a matrix formulation together with symmetries of the
free distributive lattice and formal concept analysis \cite{Jaekel2023}.  Van
Hirtum, De Causmaecker and collaborators used a P-coefficient formula,
equivalence-class reductions, and FPGA supercomputing
\cite{VanHirtumEtAl2023FPGA}; the mathematical form of this computation and
its extensions are developed further in \cite{DeCausmaeckerVanHirtum2024}.
Independent congruence checks for the ninth Dedekind number were also obtained
by Pawelski and Szepietowski \cite{PawelskiSzepietowski2023}.
Both computations produced
\[
  M(9)=286386577668298411128469151667598498812366.
\]
These exact computations reduce an enormous finite sum by algebraic structure,
symmetry, interval decompositions, and specialized hardware.  Their primary
output is the total \(M(n)\).  This should be distinguished from the finer
Whitney-number sequence
\[
  a_n(k)=\#\{D\subseteq \Bn:\ D\text{ is a downset and } |D|=k\},
  \qquad 0\le k\le 2^n,
\]
where \(a_n(k)\) is the \(k\)-th Whitney number of the second kind
(or rank number) of \(\mathcal I(\Bn)\), ranked by ideal cardinality
\cite[Ch.~3]{Stanley1986}.  This sequence records how the total Dedekind number
is distributed over the ranks of the ideal lattice.  To our knowledge, exact
complete rows of this Whitney-number sequence are currently available only
through \(n=7\) \cite{OEISA269699}.  One advantage of the layer-ratio approach
below is that it reconstructs this sequence directly, rather than merely
estimating its sum.

In parallel, a separate line of work has studied Dedekind's problem
asymptotically.  Kleitman proved that \(\log_2 M(n)\) is asymptotic to the
size of a largest layer of the Boolean lattice \cite{Kleitman1969}; the error
term was sharpened by Kleitman and Markowsky \cite{KleitmanMarkowsky1975}.
Korshunov later obtained asymptotics for \(M(n)\) itself
\cite{Korshunov1977,Korshunov2003}.  Kahn gave an entropy-based proof of the
Kleitman-Markowsky bound through independent sets and antichains
\cite{Kahn2002}; the same independent-set viewpoint also appears in related
work on maximal antichains \cite{IlincaKahn2013}.  Korshunov and Shmulevich
studied the distribution of monotone Boolean functions by the number of lower
units, equivalently the number of terms in the minimal DNF
\cite{KorshunovShmulevich2002}.  More recently, cluster-expansion methods
from statistical physics have yielded refined asymptotics for Dedekind's
problem and for antichains of prescribed size
\cite{JenssenMalekshahianPark2024}.  
These results explain why most of the
mass is controlled by the central layers of the Boolean lattice, and they give
powerful asymptotic information. Recent work has also developed related
variants and generalizations of Dedekind-type counting problems
\cite{BiswasSarkar2025,PawelskiSzepietowski2025,ParkSarantisTetali2025,
FalgasRavryRatyTomon2026,JenssenParkSarantis2026}. Our objective is to
construct a finite-\(n\) numerical estimator for values such as \(M(10)\).

We develop a finite-\(n\) sampling method that reconstructs the Whitney numbers
of the downset lattice.
Decompose the set of all downsets by cardinality:
\[
  \Om_{n,k}
  =
  \{D\subseteq \Bn: D\text{ is a downset and } |D|=k\},
  \qquad
  a_n(k)=|\Om_{n,k}|.
\]
Then
\[
  M(n)=\sum_{k=0}^{2^n} a_n(k).
\]
Thus \(M(n)\) is the sum of the finite Whitney-number sequence
\((a_n(k))_{k=0}^{2^n}\).

For a downset \(D\in\Om_{n,k}\), let
\(A(D)\) and \(R(D)\) be the numbers of elements that can be added to,
respectively removed from, \(D\) while remaining a downset.  Double-counting
cover edges between adjacent layers gives
\[
  a_n(k)\,\E_k A
  =
  a_n(k+1)\,\E_{k+1}R,
  \qquad
  \frac{a_n(k+1)}{a_n(k)}
  =
  \frac{\E_k A}{\E_{k+1}R},
\]
where \(\E_k\) denotes expectation under the uniform distribution on
\(\Om_{n,k}\).  Thus the global sequence of Whitney numbers can be
reconstructed from fixed-layer averages.

Conceptually, the ratio step is in the spirit of broad-histogram identities,
but here it is specialized to the fixed-cardinality layers of the downset
lattice.  General Monte Carlo schemes for finite-set size estimation, such as
cascading exclusion \cite{ChatterjeeDiaconisHolmes2026}, follow a different
statistical route; the present method uses the adjacent-layer geometry
specific to downsets of the Boolean lattice.  The main contributions are as
follows.

\begin{itemize}
  \item We formulate Dedekind-number estimation as a finite-\(n\)
        reconstruction problem for the Whitney numbers
        \((a_n(k))_{k=0}^{2^n}\) of the downset lattice, rather than only for
        the total count \(M(n)\).

  \item We give a fixed-layer Monte Carlo implementation of the adjacent-ratio
        estimator, together with a log-space reconstruction procedure, validate
        the resulting estimates at the known values \(M(8)\) and \(M(9)\), and
        use those cases to calibrate seed-level variability, where a seed means
        one independent repetition of the fixed protocol.

  \item We apply the calibrated protocol to estimate \(M(10)\) and analyze
        the finite-\(n\) layer shape, including the two-shoulder structure at
        \(n=9\) and the higher-contrast odd-dimensional center-window patterns
        at \(n=11\) and \(n=13\).
\end{itemize}

The remainder of the paper is organized as follows.
\Cref{sec:theory-method} develops the finite-\(n\) layer-ratio framework,
including the layer decomposition, the adjacent-layer identity, fixed-layer
sampling, deterministic reconstruction, consistency, and the numerical
protocol.  \Cref{sec:results} presents the main numerical results, including
the known-value validation, the \(M(10)\) estimate, and the reconstructed
Whitney-number shapes.  Appendix~\ref{sec:numerical-validation-details}
records the \(M(10)\) production protocol and additional validation
diagnostics.

\section{Theory and Method}
\label{sec:theory-method}

\subsection{Layer Decomposition and Ratio Identity}
\label{sec:layer-decomposition}

We use the coordinate model of the Boolean lattice:
\(\Bn=\{0,1\}^n\).  Its order is the coordinatewise order.  Thus, for
\(x,y\in\Bn\),
\[
  x\preceq y
  \quad\Longleftrightarrow\quad
  x_i\le y_i\text{ for every }1\le i\le n .
\]
We write \(x\prec y\) when \(x\preceq y\) and \(x\ne y\).  

A subset \(D\subseteq \Bn\) is a downset if
\[
  x\in D,\ y\preceq x \quad\Longrightarrow\quad y\in D .
\]
The \(n\)th Dedekind number is
\[
  M(n)=\#\{D\subseteq \Bn: D\text{ is a downset}\}.
\]

Equivalently, \(M(n)\) counts monotone Boolean functions and antichains.  Let
\(N=|\Bn|=2^n\).  The downsets are ranked by cardinality: for
\(0\le k\le N\), write
\[
  \Om_{n,k}=\{D\subseteq \Bn: D\text{ is a downset and } |D|=k\}
\]
and
\[
  a_n(k)=|\Om_{n,k}|.
\]
The number \(a_n(k)\) is the \(k\)-th Whitney number of the second kind
(or rank number) of the ranked ideal lattice \(\mathcal I(\Bn)\).
The sequence \((a_n(k))\) is therefore the
Whitney-number sequence of \(\mathcal I(\Bn)\), and
\[
  M(n)=\sum_{k=0}^{2^n} a_n(k).
\]
When \(n\) is fixed, we suppress it from the notation when this causes no
ambiguity.  Let \(\mu_{n,k}\) be the uniform probability measure on
\(\Om_{n,k}\).  For any real-valued function
\(F:\Om_{n,k}\to\mathbb R\), define
\[
  \E_k F
  =
  \frac{1}{a_n(k)}\sum_{D\in\Om_{n,k}}F(D).
\]

There is a useful boxed-partition viewpoint for these definitions.  An
\(n\)-dimensional partition may be viewed as a finite set of boxes in
\(\mathbb Z_{\ge0}^n\) satisfying the melting rule: whenever a box is present,
all coordinate-wise smaller boxes are present.  This is the box form of
MacMahon's plane partition and higher-dimensional partitions
\cite{MacMahon1898PartitionsII,MacMahon1912PartitionsVI}, using the recent
terminology of \cite{XiangFengZhuoChenZhang2026}.
Restricting the ambient corner to the \(2\times\cdots\times2\) box
\(\{0,1\}^n\) gives exactly the downsets of \(\Bn\).  Thus \(M(n)\) is the
number of legal boxed configurations in the \(n\)-dimensional Boolean box of
side length \(2\), and the Whitney number \(a_n(k)\) counts those
configurations with exactly \(k\) occupied boxes; see
\cref{fig:boxed-boolean-layers}.

\begin{figure}[t]
\centering
\includegraphics[width=\textwidth,height=0.74\textheight,keepaspectratio]{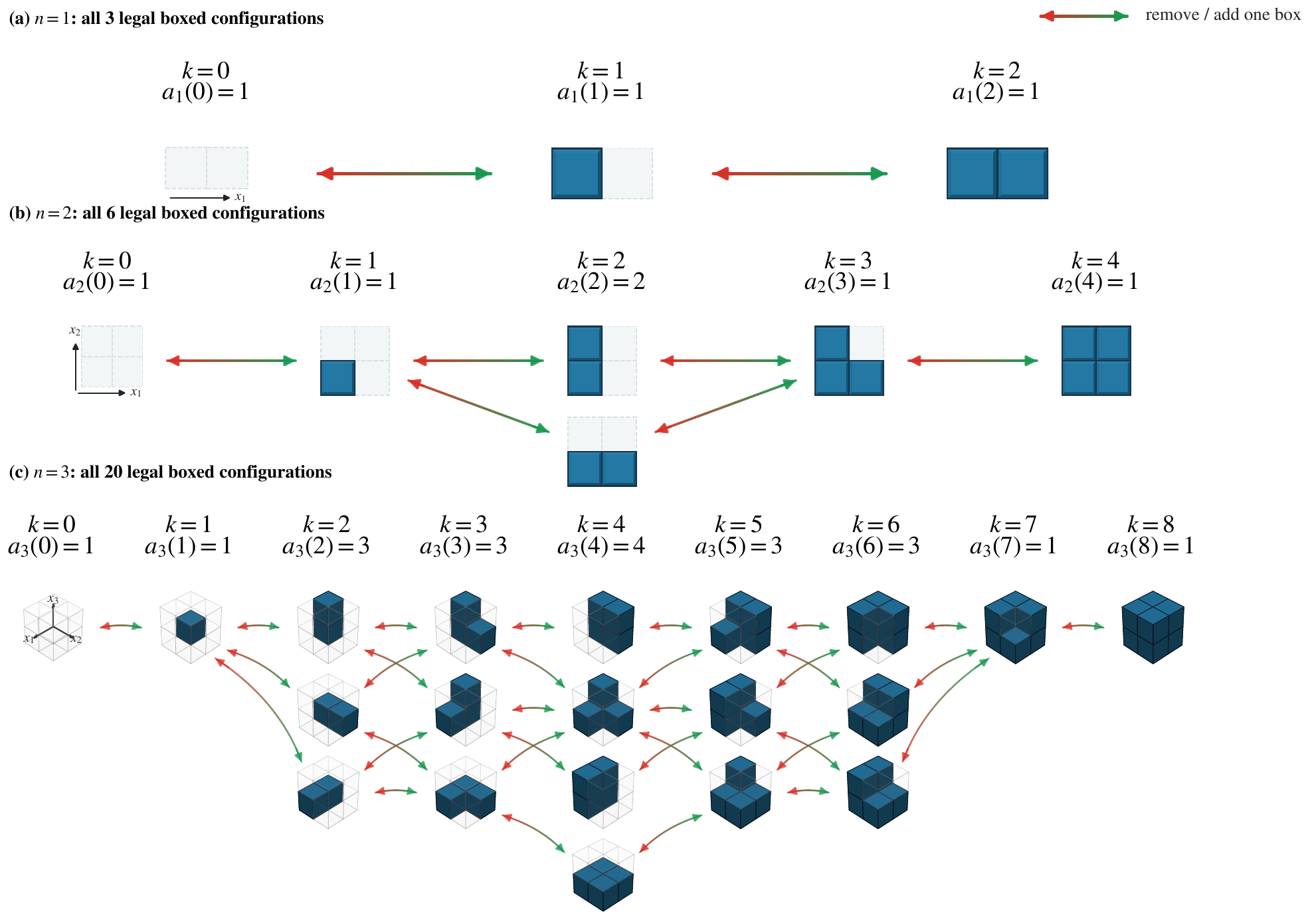}
\caption{Boxed-partition view of the first Boolean boxes.  Colored boxes form a
downset: if a box is occupied, every coordinate-wise smaller box is occupied.
Within each dimension, configurations are grouped by their number \(k\) of
occupied boxes, i.e. by the cardinality layer \(\Omega_{n,k}\).  The Whitney
numbers shown in each row sum to \(M(1)=3\), \(M(2)=6\), and \(M(3)=20\).
Green arrows indicate adjacent-layer moves that add one box; red arrows indicate
the reverse moves that remove one box.}
\label{fig:boxed-boolean-layers}
\end{figure}

For \(D\in\Om_{n,k}\), let
\[
  A(D)=\#\{x\in \Bn\setminus D: D\cup\{x\}\text{ is a downset}\}
\]
and
\[
  R(D)=\#\{x\in D: D\setminus\{x\}\text{ is a downset}\}.
\]
Equivalently,
\[
  A(D)=|\min(\Bn\setminus D)|,
  \qquad
  R(D)=|\max(D)|,
\]
where the minimum and maximum are taken with respect to \(\preceq\).

The global enumeration can now be expressed in terms of cover edges between
adjacent cardinality layers.  Let
\[
  \calE_{n,k}
  =
  \{(D,\Gamma)\in\Om_{n,k}\times\Om_{n,k+1}:D\subseteq\Gamma\}
\]
be the set of cover edges between the two adjacent layers.  Since
\(|\Gamma|=|D|+1\), every edge has the form \(\Gamma=D\cup\{x\}\) for a unique
element \(x\in\Bn\setminus D\).  Equivalently, \(x\) is addable for \(D\), and
the same \(x\) is removable for \(\Gamma\).

\begin{theorem}[Layer-ratio reconstruction]
\label{thm:layer-ratio-reconstruction}
For \(0\le k<N=2^n\),
\[
  a_n(k)\,\E_k A = a_n(k+1)\,\E_{k+1} R .
\]
Consequently,
\[
  \frac{a_n(k+1)}{a_n(k)}
  =
  \frac{\E_k A}{\E_{k+1} R}.
\]
If the addable/removable averages \(\E_k A\) and \(\E_k R\) are known for all relevant
layers, then the whole Whitney-number sequence is determined by
\[
  a_n(0)=1,\qquad
  a_n(k+1)
  =
  a_n(k)\frac{\E_k A}{\E_{k+1}R}
  \quad (0\le k<N).
\]
Equivalently,
\[
  a_n(k)
  =
  \prod_{j=0}^{k-1}\frac{\E_j A}{\E_{j+1}R},
  \qquad
  M(n)=\sum_{k=0}^{N}a_n(k).
\]
\end{theorem}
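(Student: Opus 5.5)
The plan is to double count the cover-edge set \(\calE_{n,k}\) introduced just before the statement, once from each side. From the lower layer, each \(D\in\Om_{n,k}\) is the first coordinate of exactly \(A(D)\) edges. The map \(x\mapsto (D,D\cup\{x\})\) is a bijection from the addable elements of \(D\) onto the edges with first coordinate \(D\). It is well defined because \(D\cup\{x\}\) is a downset of size \(k+1\). It is injective because \(x\) is recovered as \(\Gamma\setminus D\). It is surjective because \(|\Gamma|=|D|+1\) and \(D\subseteq\Gamma\) force \(\Gamma=D\cup\{x\}\) for a unique \(x\), and this \(x\) is addable since \(\Gamma\) is a downset. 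Summing over \(D\) gives \(|\calE_{n,k}|=\sum_{D\in\Om_{n,k}}A(D)=a_n(k)\,\E_k A\).

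From the upper layer, each \(\Gamma\in\Om_{n,k+1}\) is the second coordinate of exactly \(R(\Gamma)\) edges. Here the bijection is \(x\mapsto(\Gamma\setminus\{x\},\Gamma)\) on the removable elements, justified symmetrically. Summing gives \(|\calE_{n,k}|=a_n(k+1)\,\E_{k+1}R\). Equating the two counts yields the first identity.

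For the ratio form, I must make sure the denominators are nonzero. First, \(a_n(j)\ge1\) for every \(0\le j\le N\): take the initial segment of the first \(j\) elements of any linear extension of \(\Bn\), which is a downset of size \(j\). Second, \(R(\Gamma)\ge1\) for every nonempty \(\Gamma\), since any finite nonempty poset has a maximal element. So \(\E_{k+1}R>0\) for \(k+1\ge1\). Dividing by \(a_n(k)\,\E_{k+1}R\) gives the ratio. (Likewise \(\E_kA>0\) for \(k<N\), so the ratios are positive, though this is not needed.)

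The recursion then follows from \(a_n(0)=1\), the empty set being the unique downset of size \(0\), together with the ratio identity. Induction on \(k\) gives the telescoping product, and \(M(n)=\sum_k a_n(k)\) is just the layer decomposition already recorded. The only step needing any care is the bijection between addable elements and edges, specifically that the added element is uniquely determined and automatically addable. Beyond that, the argument is routine and I expect no real obstacle.
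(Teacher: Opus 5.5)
Your proposal is correct and follows the paper's proof: you double count the cover-edge set \(\calE_{n,k}\) from the lower layer, giving \(a_n(k)\,\E_k A\), and from the upper layer, giving \(a_n(k+1)\,\E_{k+1}R\), then divide and iterate from \(a_n(0)=1\). Your explicit bijections, and the linear-extension and maximal-element arguments for \(a_n(k)>0\) and \(\E_{k+1}R>0\), just fill in details the paper asserts without proof.
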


\begin{proof}
We count the same finite edge set \(\calE_{n,k}\) in two ways.  From the lower
layer, the number of edges incident to a downset \(D\in\Om_{n,k}\) is exactly
\(A(D)\).  Therefore
\[
  |\calE_{n,k}|
  =
  \sum_{D\in\Om_{n,k}} A(D)
  =
  a_n(k)\,\E_k A.
\]
From the upper layer, the number of edges incident to
\(\Gamma\in\Om_{n,k+1}\) is exactly \(R(\Gamma)\).  Hence
\[
  |\calE_{n,k}|
  =
  \sum_{\Gamma\in\Om_{n,k+1}} R(\Gamma)
  =
  a_n(k+1)\,\E_{k+1}R.
\]
The two expressions count the same edge set, so they are equal.  Since
\(a_n(k)>0\) and \(\E_{k+1}R>0\), division gives the ratio identity.
Iterating this identity from the endpoint value \(a_n(0)=1\)
yields the reconstruction formulas, and summing the reconstructed Whitney
numbers gives \(M(n)\).
\end{proof}

The adjacent-layer identity is a finite-poset analogue of a broad-histogram
relation: consecutive Whitney-number ratios are obtained from fixed-layer
averages of addable and removable element counts, without imposing a
parametric model on the sequence \(a_n(k)\)
\cite{DeOliveiraPennaHerrmann1996}.

\subsection{Fixed-Layer Sampling}
\label{sec:fixed-layer-sampling}

For a fixed \(n\) and a fixed layer \(0\le k\le N=2^n\), the Markov chain
used in this paper has state space \(\Om_{n,k}\).  Its purpose is to sample
approximately from the uniform layer measure \(\mu_{n,k}\), so that averages of
the addable/removable statistics \(A\) and \(R\) approximate the microcanonical
expectations \(\E_k A\) and \(\E_k R\).  The sampling and boundary-measurement
mechanism is illustrated in the three-dimensional example in
\cref{fig:layer-mcmc-schematic}.

\begin{figure}[t]
\centering
\includegraphics[width=0.88\textwidth]{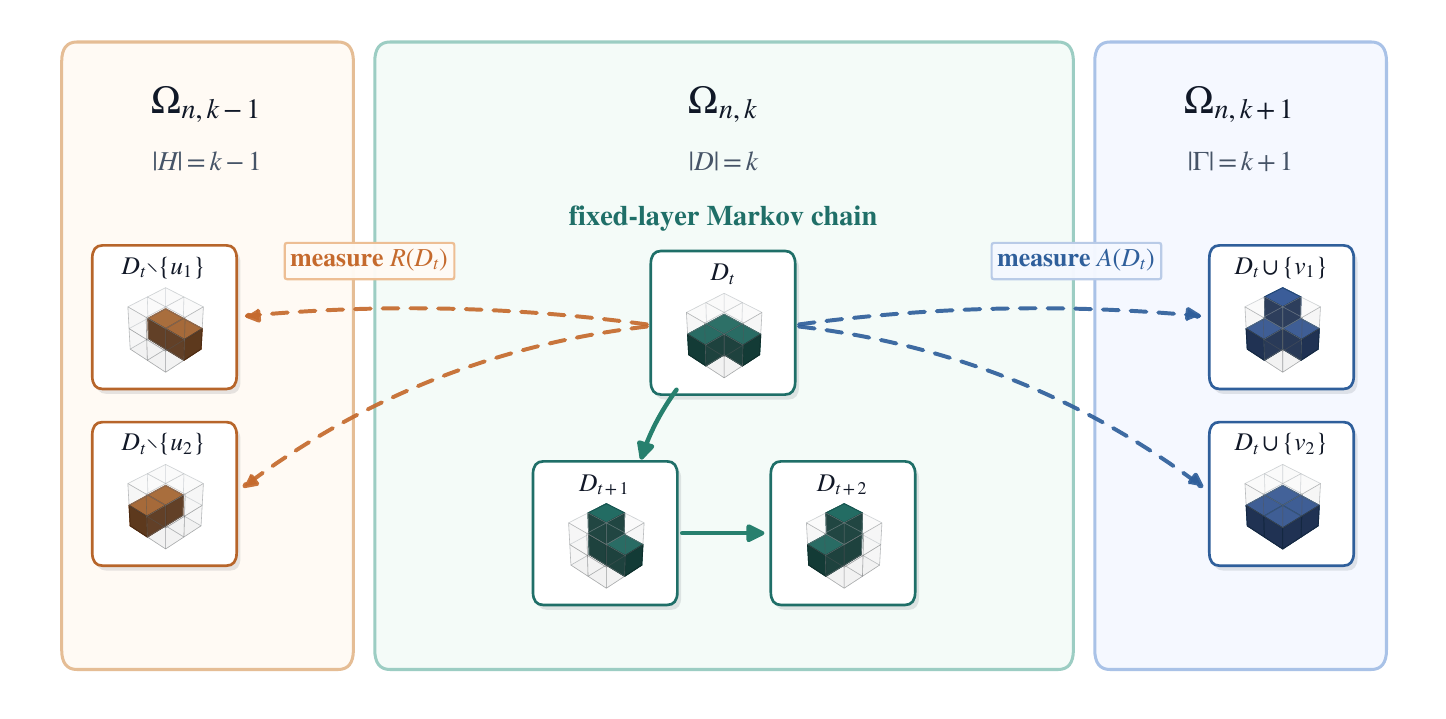}
\caption{Three-dimensional \(B_3\) schematic of the fixed-layer sampling
mechanism.  Solid green arrows inside \(\Omega_{n,k}\) represent exchange
moves of the Markov chain within the fixed-cardinality layer; the miniature
states are valid \(B_3\) downsets drawn for visualization.  Dashed orange
arrows to \(\Omega_{n,k-1}\) and dashed blue arrows to \(\Omega_{n,k+1}\)
represent boundary measurements of removable and addable choices used to
estimate \(\E_k R\) and \(\E_k A\), not transitions of a global chain.
Adjacent layer ratios are then reconstructed from the exact adjacent-layer
identity.}
\label{fig:layer-mcmc-schematic}
\end{figure}

The elementary move is an exchange move.  Starting from a downset
\(D\in\Om_{n,k}\), first delete a removable vertex and then add an addable
vertex:
\[
  D
  \longrightarrow
  D^- = D\setminus\{u\}
  \longrightarrow
  \Gamma = D^-\cup\{v\}.
\]
Here
\[
  u\in\max(D),
  \qquad
  v\in\min(\Bn\setminus D^-).
\]
By the extremal characterization of addable and removable elements above,
deleting \(u\) preserves the downset property, and adding \(v\) to \(D^-\)
also preserves it.  Therefore
\(\Gamma\in\Om_{n,k}\).  If \(v=u\), then \(\Gamma=D\), giving a natural
self-loop.

The exchange graph \(G_{n,k}\) has vertex set \(\Om_{n,k}\), with an edge
between two distinct states \(D\) and \(\Gamma\) if they differ by deleting one
element and adding one element as above.  The Markov chain is a
Metropolis-Hastings chain on this graph, with the self-loops coming both from
trivial proposals and from rejected nontrivial proposals
\cite{MetropolisEtAl1953,Hastings1970}.

Assume first that \(0<k<N\).  Given \(D\in\Om_{n,k}\), the proposal is:
\begin{enumerate}
  \item choose \(u\) uniformly from the \(R(D)\) removable vertices of \(D\);
  \item set \(D^-=D\setminus\{u\}\);
  \item choose \(v\) uniformly from the \(A(D^-)\) addable vertices of \(D^-\);
  \item propose \(\Gamma=D^-\cup\{v\}\).
\end{enumerate}
For a nontrivial proposal \(D\ne\Gamma\), the removed and added vertices are
unique.  Write
\[
  D\setminus\Gamma=\{u\},
  \qquad
  \Gamma\setminus D=\{v\},
  \qquad
  H=D\setminus\{u\}=\Gamma\setminus\{v\}.
\]
Then the proposal probability from \(D\) to \(\Gamma\) is
\[
  q(D,\Gamma)
  =
  \frac{1}{R(D)A(H)}.
\]
The reverse proposal deletes \(v\) from \(\Gamma\) and adds \(u\) back to the
same intermediate downset \(H\), so
\[
  q(\Gamma,D)
  =
  \frac{1}{R(\Gamma)A(H)}.
\]

The target distribution on \(\Om_{n,k}\) is uniform.  Hence the
Metropolis-Hastings acceptance probability for a nontrivial proposal is
\[
  \alpha(D,\Gamma)
  =
  \min\left\{1,\frac{q(\Gamma,D)}{q(D,\Gamma)}\right\}
  =
  \min\left\{1,\frac{R(D)}{R(\Gamma)}\right\}.
\]
If the proposal is rejected, the chain remains at \(D\).  If \(v=u\), the
proposal is already \(D\) and is treated as an accepted self-loop.  The endpoint
layers \(k=0\) and \(k=N\) are singletons, so the chain is the trivial stationary
chain there.

\begin{lemma}[Stationarity]
\label{lem:stationarity}
For each \(0\le k\le N\), the uniform distribution \(\mu_{n,k}\) on
\(\Om_{n,k}\) is stationary for the transition kernel described above.
\end{lemma}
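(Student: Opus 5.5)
The plan is to prove the stronger statement that the kernel \(P\) is reversible with respect to \(\mu_{n,k}\), i.e.\ satisfies detailed balance \(\mu_{n,k}(D)P(D,\Gamma)=\mu_{n,k}(\Gamma)P(\Gamma,D)\) for all \(D,\Gamma\in\Om_{n,k}\). Stationarity then follows by summing over \(D\): \(\sum_D \mu_{n,k}(D)P(D,\Gamma)=\mu_{n,k}(\Gamma)\sum_D P(\Gamma,D)=\mu_{n,k}(\Gamma)\). Since \(\mu_{n,k}\) is uniform, detailed balance reduces to the symmetry \(P(D,\Gamma)=P(\Gamma,D)\) for \(D\ne\Gamma\). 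The diagonal entries need no check, because detailed balance is trivial there. The endpoint layers \(k=0\) and \(k=N\) are singletons, so the chain is the identity and there is nothing to prove. Hence I would assume \(0<k<N\) from now on.

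First I would verify that the kernel is well defined. For \(0<k<N\), every \(D\in\Om_{n,k}\) is nonempty, so \(R(D)\ge1\) (a finite nonempty poset has a maximal element). Likewise \(D^-\ne\Bn\), so \(A(D^-)\ge1\). The intermediate steps always produce downsets, by the extremal characterization \(A=|\min(\Bn\setminus\cdot)|\), \(R=|\max(\cdot)|\) recorded before the lemma. Thus \(P(D,\cdot)\) is a probability distribution on \(\Om_{n,k}\).

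Next I would compute the off-diagonal entries. Take \(D\ne\Gamma\) in \(\Om_{n,k}\). If they are not adjacent in \(G_{n,k}\), meaning \(|D\setminus\Gamma|\ne1\), then both \(P(D,\Gamma)\) and \(P(\Gamma,D)\) vanish. If they are adjacent, the decomposition \(D\setminus\Gamma=\{u\}\), \(\Gamma\setminus D=\{v\}\), \(H=D\cap\Gamma\) is forced, so exactly one proposal path leads from \(D\) to \(\Gamma\). That path requires \(u\in\max(D)\) and \(v\in\min(\Bn\setminus H)\). The next thing to check is that this condition is symmetric in the pair. Indeed, \(H\cup\{v\}=\Gamma\) is a downset, so \(v\) is addable to \(H\), which means \(v\) is removable from \(\Gamma\). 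Similarly, \(u\) is addable to \(H\) because \(D\) is a downset. So the reverse path exists exactly when the forward one does, and the intermediate set \(H\) is the same in both directions.

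Using the proposal probabilities computed in the text, this gives
\[
P(D,\Gamma)=\frac{1}{R(D)A(H)}\min\Bigl\{1,\frac{R(D)}{R(\Gamma)}\Bigr\}=\frac{1}{A(H)\max\{R(D),R(\Gamma)\}},
\]
and this expression is symmetric in \(D\) and \(\Gamma\). The main point requiring care is exactly this uniqueness-of-path argument together with the shared intermediate state \(H\). It ensures that \(q\) contains no hidden multiplicities and that both directions use the same factor \(A(H)\). Once that is settled, the remaining computation is routine.
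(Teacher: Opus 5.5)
Your proof is correct and follows essentially the same route as the paper. Both reduce stationarity to detailed balance for the uniform target and observe that the off-diagonal transition probability \(1/(A(H)\max\{R(D),R(\Gamma)\})\) is symmetric in \(D\) and \(\Gamma\); your explicit checks of well-definedness, uniqueness of the proposal path, and the shared intermediate state \(H\) spell out details the paper leaves implicit.
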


\begin{proof}
For \(k=0\) and \(k=N\), the state space is a singleton, so the claim is
immediate.  Suppose \(0<k<N\).  It is enough to verify detailed balance for
distinct neighboring states \(D,\Gamma\in\Om_{n,k}\).  Since \(\mu_{n,k}\) is
uniform, detailed balance reduces to
\[
  q(D,\Gamma)\alpha(D,\Gamma)
  =
  q(\Gamma,D)\alpha(\Gamma,D).
\]
Using the formula above, with \(H=D\cap\Gamma\), the left-hand side is
\[
  \frac{1}{R(D)A(H)}
  \min\left\{1,\frac{R(D)}{R(\Gamma)}\right\}
  =
  \frac{1}{A(H)\max\{R(D),R(\Gamma)\}}.
\]
The same expression is obtained after exchanging \(D\) and \(\Gamma\).  Hence
detailed balance holds for every off-diagonal transition.  The diagonal terms
then balance automatically because each row of the transition matrix sums to
one.  Therefore \(\mu_{n,k}\) is stationary.
\end{proof}

\begin{lemma}[Connectivity of fixed layers]
\label{lem:layer-connectivity}
For every \(0\le k\le N\), the exchange graph \(G_{n,k}\) is connected.
\end{lemma}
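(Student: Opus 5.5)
We will show that every downset in \(\Om_{n,k}\) is joined in \(G_{n,k}\) to one fixed canonical downset \(C_k\). Connectivity then follows. For \(k=0\) and \(k=N\) the layer is a singleton and there is nothing to prove, so assume \(0<k<N\).

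\textbf{The canonical downset and the potential.} Fix a linear extension \(x_1,x_2,\dots,x_N\) of \(\Bn\), for instance order by Hamming weight and break ties lexicographically. This means \(x_i\prec x_j\) implies \(i<j\). Let
\[
  C_k=\{x_1,\dots,x_k\}.
\]
It is a downset: if \(y\preceq x_j\) with \(j\le k\), then \(y=x_i\) with \(i\le j\le k\). For \(D\in\Om_{n,k}\) use the potential
\[
  \Phi(D)=\sum_{x_i\in D} i .
\]
Every \(k\)-element subset of \(\Bn\) has \(\Phi\ge 1+\dots+k\), with equality only for \(\{x_1,\dots,x_k\}\). Hence \(\Phi(D)\) attains its minimum over \(\Om_{n,k}\) exactly at \(D=C_k\).

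\textbf{Key step: a strictly decreasing exchange move.} Suppose \(D\in\Om_{n,k}\) and \(D\neq C_k\). We produce \(\Gamma\in\Om_{n,k}\), adjacent to \(D\) in \(G_{n,k}\), with \(\Phi(\Gamma)<\Phi(D)\).

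\begin{enumerate}
  \item Let \(u=x_j\) be the element of \(D\) with the largest index. It is maximal in \(D\): any \(y\in D\) with \(u\prec y\) would have a larger index. So \(u\in\max(D)\), and \(H=D\setminus\{u\}\) is a downset.
  \item Since \(D\neq C_k\) and \(|D|=k\), some \(x_i\) with \(i\le k\) lies outside \(D\). Let \(i\) be the smallest such index. Then \(x_1,\dots,x_{i-1}\in D\).
  \item Because \(j\) is the largest index in \(D\) and \(|D|=k\), we have \(j\ge k\ge i\). Since \(x_i\notin D\), in fact \(j>i\). So \(u\neq x_i\), and \(x_1,\dots,x_{i-1}\in H\).
  \item Every \(y\prec x_i\) equals some \(x_l\) with \(l<i\), hence lies in \(H\). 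Together with \(x_i\notin H\), this gives \(x_i\in\min(\Bn\setminus H)\). Thus \(v=x_i\) is an addable vertex of \(H\).
  \item Set \(\Gamma=H\cup\{x_i\}\). It is a downset in \(\Om_{n,k}\), distinct from \(D\), and it arises from \(D\) by one exchange move. Therefore \(D\) and \(\Gamma\) are adjacent in \(G_{n,k}\).
  \item Finally, \(\Phi(\Gamma)=\Phi(D)-j+i<\Phi(D)\).
\end{enumerate}

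\textbf{Conclusion.} \(\Phi\) takes finitely many positive integer values and strictly decreases along these moves. Iterating from any \(D\in\Om_{n,k}\) therefore terminates, and it can only stop at the unique minimizer \(C_k\). So every state is connected to \(C_k\), and \(G_{n,k}\) is connected.

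The step needing the most care is choosing the removed vertex \(u\) and the added vertex \(v\) so that both moves are legal simultaneously. The issue is that removing \(u\) might destroy the addability of \(v\) if \(u\prec v\). Taking \(u\) of largest index and \(v\) of smallest missing index avoids this. It guarantees \(u\neq v\), and all predecessors of \(v\) have indices below \(i<j\), so they survive the removal of \(u\).
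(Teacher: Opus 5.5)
Your proof is correct. It follows a somewhat different route from the paper: you route every state to a fixed hub, while the paper connects any two states directly.

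\textbf{The paper's argument.} Take arbitrary \(D\neq E\). Delete an element \(u\) maximal in \(D\setminus E\); it is automatically maximal in \(D\), since anything in \(D\) above \(u\) would lie in \(D\setminus E\) or in \(E\), and the latter forces \(u\in E\). Add an element \(v\) minimal in \(E\setminus D\). The check \(u\not\prec v\) uses the downset property of \(E\). Each such move lowers \(|D\setminus E|\) by one.

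\textbf{How your move relates.} Your move is exactly this move with the target specialized to \(E=C_k\). Since \(D\neq C_k\) forces \(j>k\), your \(u=x_j\) lies in \(D\setminus C_k\) and is maximal there. Your \(v=x_i\) is minimal in \(C_k\setminus D\). So the simpler potential \(|D\setminus C_k|\) would also work, since it drops by exactly one per move. The unique-minimizer property of \(\Phi\) is not needed: termination at \(C_k\) already follows from the existence of a move at every \(D\neq C_k\).

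\textbf{What each approach buys.} The linear extension turns both legality checks into index comparisons: \(u\) carries the top index in \(D\), and every predecessor of \(v\) has index below \(i<j\). The cost is routing all paths through the hub \(C_k\). The paper's pairwise version yields a path of length \(|D\setminus E|\) between any two states. Since any exchange move changes \(|D\setminus E|\) by at most one, this is exactly the graph distance in \(G_{n,k}\). Your hub argument, even with the sharper potential, only bounds that distance by \(|D\setminus C_k|+|E\setminus C_k|\).
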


\begin{proof}
The cases \(k=0\) and \(k=N\) are trivial.  Let \(0<k<N\), and take two
states \(D,E\in\Om_{n,k}\).  If \(D=E\), there is nothing to prove.

Assume \(D\ne E\).  Choose an element \(u\) maximal in \(D\setminus E\) with
respect to \(\preceq\).  Then \(u\) is also maximal in \(D\).  Indeed, suppose
that there exists \(z\in D\) strictly above \(u\), that is, \(u\prec z\).  If
\(z\notin E\), then \(z\in D\setminus E\), contradicting
the maximality of \(u\) in \(D\setminus E\).  If \(z\in E\), then the downset
property of \(E\), together with \(u\prec z\), implies \(u\in E\), again a
contradiction.  Hence \(u\in\max(D)\), so deleting \(u\) preserves the downset
property.

Next choose an element \(v\) minimal in \(E\setminus D\), again with respect to
\(\preceq\).  We claim that \(v\) is addable to
\(D^-:=D\setminus\{u\}\).  Let \(y\prec v\).  Since \(v\in E\) and \(E\)
is a downset, we have \(y\in E\).  If \(y\notin D\), then \(y\in E\setminus D\),
contradicting the minimality of \(v\).  Thus every \(y\prec v\)
lies in \(D\).  Moreover \(y\ne u\): if \(y=u\), then \(u\prec v\) and
\(v\in E\) would imply \(u\in E\), contradicting \(u\in D\setminus E\).
Therefore every \(y\prec v\) lies in \(D^-\), so
\(D^-\cup\{v\}\) is a downset.

Thus
\[
  D'=(D\setminus\{u\})\cup\{v\}
\]
is a legal exchange move in \(G_{n,k}\).  Since \(u\in D\setminus E\) is
removed and \(v\in E\setminus D\) is added, we have
\[
  |D'\setminus E|=|D\setminus E|-1 .
\]
Repeating the same argument finitely many times reaches \(E\).  Hence every
two states in \(\Om_{n,k}\) are connected by legal exchange moves, so
\(G_{n,k}\) is connected.
\end{proof}

\begin{proposition}[Ergodicity of the fixed-layer chain]
\label{prop:ergodic}
For every \(0\le k\le N\), the fixed-layer chain is irreducible and aperiodic.
Consequently, for fixed \(n\) and \(k\), the empirical averages of \(A\) and
\(R\) along the chain, from any initial state, converge almost surely to
\(\E_k A\) and \(\E_k R\), respectively.  Discarding any fixed finite burn-in
does not change these limits.
\end{proposition}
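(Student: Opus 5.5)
The argument has three parts: irreducibility from the connectivity lemma, aperiodicity from a self-loop, and the ergodic theorem for finite chains.

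\textbf{Irreducibility.} For \(k=0\) or \(k=N\) the state space is a singleton and everything is trivial, so take \(0<k<N\). By \cref{lem:layer-connectivity} the exchange graph \(G_{n,k}\) is connected. Every edge \(\{D,\Gamma\}\) of \(G_{n,k}\) carries positive transition probability in both directions. Indeed, \(D\ne\Gamma\) differ by a legal exchange with intermediate downset \(H=D\cap\Gamma\), so the proposal probability is \(q(D,\Gamma)=1/(R(D)A(H))>0\). The acceptance probability \(\min\{1,R(D)/R(\Gamma)\}\) is also positive, since \(R(\Gamma)\ge1\) for a nonempty downset and \(R(D)\ge 1\). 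A path in \(G_{n,k}\) therefore gives a positive-probability path for the chain, and the chain is irreducible.

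\textbf{Aperiodicity.} It suffices to exhibit one state with \(P(D,D)>0\), since an irreducible chain with one self-loop is aperiodic. For any \(D\in\Om_{n,k}\) with \(0<k<N\), pick \(u\in\max(D)\), which is nonempty because \(k>0\). Then \(u\) is addable to \(D^-=D\setminus\{u\}\), because \(D\) itself is a downset. So the proposal \(v=u\) occurs with probability at least \(1/(R(D)A(D^-))>0\), and it is treated as an accepted self-loop. Hence every state has a self-loop, and the chain is aperiodic.

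\textbf{Convergence of averages.} Now invoke the ergodic theorem (strong law of large numbers) for irreducible Markov chains on a finite state space. For any function \(F\), the empirical averages \(\frac1T\sum_{t<T}F(X_t)\) converge almost surely, from any initial state, to the expectation of \(F\) under the unique stationary distribution. Uniqueness follows from irreducibility. By \cref{lem:stationarity} that distribution is \(\mu_{n,k}\). Applying this to \(F=A\) and \(F=R\), both bounded on the finite set \(\Om_{n,k}\), gives convergence to \(\E_kA\) and \(\E_kR\). Aperiodicity is not needed for this step, but it yields convergence in distribution as a bonus.

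\textbf{Burn-in.} Discarding a fixed burn-in of \(b\) steps changes the partial sum by at most \(b\max|F|\), which vanishes after division by \(T\). Alternatively, the shifted chain is the same chain started from \(X_b\), and the convergence holds from any initial state. Either way the limits are unchanged.

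\textbf{Main obstacle.} The only real point is checking that the Metropolis–Hastings acceptance never kills an edge of \(G_{n,k}\), i.e.\ that \(R\ge1\) on every nonempty layer. Everything else is standard finite-chain theory.
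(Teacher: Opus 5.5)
Your proposal is correct and follows the same route as the paper. Irreducibility comes from \cref{lem:layer-connectivity} together with positive proposal and acceptance probabilities on every edge, aperiodicity comes from the self-loop obtained by proposing \(v=u\), and the limits follow from the finite-state ergodic theorem combined with \cref{lem:stationarity}. Your additional remarks are accurate refinements of points the paper leaves implicit: the explicit \(b\max|F|/T\) bound for the burn-in, and the observation that the strong law needs only irreducibility, not aperiodicity.
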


\begin{proof}
Irreducibility follows from \cref{lem:layer-connectivity}, because every edge
of \(G_{n,k}\) has positive proposal probability and positive acceptance
probability.

For aperiodicity, consider first \(0<k<N\).  At any state \(D\), choose any
removable vertex \(u\in\max(D)\).  After deleting \(u\), the same vertex \(u\)
is addable to \(D\setminus\{u\}\).  Thus the proposal can choose \(v=u\), which
returns immediately to \(D\) with positive probability.  Hence every state has
a positive self-loop.  The singleton layers \(k=0\) and \(k=N\) are also
aperiodic.  Therefore the chain is aperiodic in all layers.  Together with
\cref{lem:stationarity}, the finite-state Markov-chain ergodic theorem applies
to any real-valued function on \(\Om_{n,k}\), in particular to \(A\) and \(R\)
\cite[Chapter~4]{LevinPeresWilmer2009}.
\end{proof}

\subsection{Estimator, Reconstruction, and Consistency}
\label{sec:estimator-reconstruction}

Fix \(n\), and let \(N=2^n\).  The estimator works layer by layer.  For a
sampled layer \(k\), let \(C_k\) be the number of fixed-layer chains run on
\(\Om_{n,k}\).  Chain \(c\) contributes \(m_{k,c}\) recorded states after
burn-in and thinning; we write these states as
\[
  D_{k,c,1},\ldots,D_{k,c,m_{k,c}}\in\Om_{n,k}.
\]
The total number of recorded states in layer \(k\) is
\[
  S_k=\sum_{c=1}^{C_k}m_{k,c}.
\]
The empirical layer means of the addable and removable counts are then
\[
  \wideA_k
  =
  \frac{1}{S_k}
  \sum_{c=1}^{C_k}\sum_{t=1}^{m_{k,c}} A(D_{k,c,t}),
  \qquad
  \wideR_k
  =
  \frac{1}{S_k}
  \sum_{c=1}^{C_k}\sum_{t=1}^{m_{k,c}} R(D_{k,c,t}).
\]
Here \(A(D)\) and \(R(D)\) are the addable and removable counts defined in
\cref{sec:layer-decomposition}.
The endpoint means needed for adjacent ratios
are exact:
\[
  \wideA_0=1,\qquad \wideR_N=1,
\]
because the empty downset has exactly one addable element, and the full
downset \(\Bn\) has exactly one removable element.

The Boolean lattice has an order-reversing complement map
\[
  x=(x_1,\ldots,x_n)
  \longmapsto
  x^c=(1-x_1,\ldots,1-x_n).
\]
It induces a duality on downsets:
\[
  \theta(D)
  =
  \{x\in\Bn:x^c\notin D\}.
\]
Equivalently, \(\theta(D)\) is the complement in \(\Bn\) of the image of \(D\)
under \(x\mapsto x^c\).

\begin{lemma}[Layer duality]
\label{lem:layer-duality}
For every downset \(D\subseteq\Bn\), \(\theta(D)\) is a downset,
\(|\theta(D)|=N-|D|\), and \(\theta(\theta(D))=D\).  Consequently
\[
  a_n(k)=a_n(N-k)
  \qquad (0\le k\le N).
\]
Moreover,
\[
  A(D)=R(\theta(D)),
  \qquad
  R(D)=A(\theta(D)).
\]
Hence
\[
  \E_k A=\E_{N-k}R,
  \qquad
  \E_k R=\E_{N-k}A.
\]
\end{lemma}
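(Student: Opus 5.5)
The plan is to write \(\theta\) as a composition of two simple bijections on subsets of \(\Bn\) and read off every claim from elementary properties of these maps. Let \(\sigma(S)=\{x^c:x\in S\}\) and \(\kappa(S)=\Bn\setminus S\). Then \(\theta=\kappa\circ\sigma\). Since \(x\mapsto x^c\) is an order-reversing involution, \(\sigma\) sends downsets to upsets and upsets to downsets. Complementation \(\kappa\) also interchanges downsets and upsets. Hence \(\theta\) maps downsets to downsets.

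For the cardinality, \(|\theta(D)|=N-|\sigma(D)|=N-|D|\), because \(\sigma\) is injective. For the involution property, \(\sigma\) and \(\kappa\) commute, since \(\sigma\) is a bijection of \(\Bn\) and so preserves complements, and each is an involution. Therefore \(\theta\circ\theta=\kappa\sigma\kappa\sigma=\kappa^2\sigma^2=\mathrm{id}\). It follows that \(\theta\) restricts to a bijection \(\Om_{n,k}\to\Om_{n,N-k}\), which gives \(a_n(k)=a_n(N-k)\).

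For \(A(D)=R(\theta(D))\), I will use the characterizations \(A(D)=|\min(\Bn\setminus D)|\) and \(R(D)=|\max(D)|\). Note that \(\theta(D)=\kappa\sigma(D)=\sigma(\kappa(D))=\sigma(\Bn\setminus D)\). Because \(x\mapsto x^c\) reverses the order, it carries the minimal elements of a set \(S\) bijectively onto the maximal elements of \(\sigma(S)\). Taking \(S=\Bn\setminus D\) gives
\[
  |\min(\Bn\setminus D)|=|\max(\sigma(\Bn\setminus D))|=|\max(\theta(D))|,
\]
that is, \(A(D)=R(\theta(D))\). Applying this identity to \(\theta(D)\) and using \(\theta\circ\theta=\mathrm{id}\) gives \(R(D)=A(\theta(D))\).

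Finally, because \(\theta\) is a bijection \(\Om_{n,k}\to\Om_{n,N-k}\), it pushes the uniform measure \(\mu_{n,k}\) forward to \(\mu_{n,N-k}\). Hence
\[
  \E_kA=\frac{1}{a_n(k)}\sum_{D\in\Om_{n,k}}R(\theta(D))=\frac{1}{a_n(N-k)}\sum_{\Gamma\in\Om_{n,N-k}}R(\Gamma)=\E_{N-k}R,
\]
and the identity \(\E_kR=\E_{N-k}A\) follows in the same way.

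The only step that needs care is the middle one: checking that \(\sigma\) commutes with complement and that it sends \(\min\) to \(\max\). Both are immediate from the fact that \(x\mapsto x^c\) is a bijective order anti-automorphism. I do not expect a genuine obstacle in this argument.
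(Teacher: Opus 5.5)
Your proof is correct and takes essentially the same route as the paper: both write \(\theta\) as set complementation composed with the order-reversing map \(x\mapsto x^c\), use that this map carries \(\min(\Bn\setminus D)\) onto \(\max(\theta(D))\), and average over the resulting bijection \(\Om_{n,k}\to\Om_{n,N-k}\). Your factorization \(\theta=\kappa\circ\sigma\) with \(\sigma\kappa=\kappa\sigma\) just spells out the steps the paper states in a single line.
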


\begin{proof}
The complement map is order-reversing.  Therefore taking the complement of the
image of \(D\) sends downsets to downsets, changes the size from \(|D|\) to
\(N-|D|\), and is its own inverse.  Under the same order-reversing bijection,
minimal elements of \(\Bn\setminus D\) correspond to maximal elements of
\(\theta(D)\), and maximal elements of \(D\) correspond to minimal elements of
\(\Bn\setminus\theta(D)\).  Hence addable and removable vertices are exchanged,
which gives the identities for \(A\) and \(R\).  Averaging over the bijection
\(\theta:\Om_{n,k}\to\Om_{n,N-k}\) gives the expectation identities.
\end{proof}

The symmetry \(a_n(k)=a_n(N-k)\) is the same rank symmetry recorded for the
level polynomials of free distributive lattices by Markowsky
\cite{Markowsky1980}; with the two endpoint ideals omitted, it appears in OEIS
A269699 as \(T(n,k)=T(n,2^n-k)\) \cite{OEISA269699}.  The same duality map
also exchanges adjacent-layer boundaries: the addable elements of \(D\) are in
bijection with the removable elements of \(\theta(D)\), and conversely. 

The use of \cref{lem:layer-duality} is not a prior estimate of the unknown
answer.  It is an exact automorphism identity of the finite poset.  In
practice it lets us mirror sampled layer summaries, reduce redundant work, and
check whether independently sampled mirror layers agree within their empirical
uncertainty.

For the reported half-row reconstructions, define the exact adjacent ratio and
its plug-in estimator on the sampled side by
\[
  \rho_k=\frac{a_n(k+1)}{a_n(k)},
  \qquad
  \widehat\rho_k=\frac{\wideA_k}{\wideR_{k+1}},
  \qquad 0\le k<N/2.
\]
By \cref{thm:layer-ratio-reconstruction},
\[
  \rho_k=\frac{\E_k A}{\E_{k+1}R},
\]
so \(\widehat\rho_k\) is obtained by replacing the two exact layer averages by
their sampled estimates.

The reconstruction is performed on the logarithmic scale.  Set
\[
  \widehat y_k
  =
  \log\widehat\rho_k
  =
  \log\wideA_k-\log\wideR_{k+1},
  \qquad 0\le k<N/2.
\]
Starting from the exact endpoint value \(a_n(0)=1\), form the cumulative
log Whitney numbers on the sampled half
\[
  x_0=0,\qquad
  x_k=\sum_{j=0}^{k-1}\widehat y_j
  \quad (1\le k\le N/2).
\]
In the reported numerical reconstructions, only one side of each dual pair is
used for the production estimate.  The remaining layers are filled by the
exact rank duality \(a_n(k)=a_n(N-k)\):
\[
  \widehat x_k=
  \begin{cases}
    x_k, & 0\le k\le N/2,\\
    x_{N-k}, & N/2<k\le N.
  \end{cases}
\]
The reconstructed Whitney number is
\[
  \widea_n(k)=\exp(\widehat x_k),
\]
and the Dedekind-number estimate is computed stably by log-sum-exp:
\[
  \log \wideM(n)
  =
  m+\log\sum_{k=0}^{N}\exp(\widehat x_k-m),
  \qquad
  m=\max_{0\le k\le N}\widehat x_k .
\]
Once the addable and removable averages have been sampled, the reported
\(M(10)\) estimate is determined entirely by this reconstruction and the exact
Boolean-lattice duality; the procedure introduces neither fitted smoothing
weights nor penalty parameters.

\begin{theorem}[Fixed-\(n\) consistency]
\label{thm:consistency}
Fix \(n\), and write \(N=2^n\).  For each layer \(0\le k\le N\), suppose that
the layer averages used in the reconstruction satisfy
\[
  \wideA_k\longrightarrow \E_k A,
  \qquad
  \wideR_k\longrightarrow \E_k R
\]
in probability for all needed \(k\).  Let \(\wideM(n)\) be obtained from the
estimated log-ratios by the deterministic log Whitney-number reconstruction described
above.  Then
\[
  \wideM(n)\longrightarrow M(n)
\]
in probability.  If the addable and removable averages converge almost surely,
then \(\wideM(n)\) also converges to \(M(n)\) almost surely.
\end{theorem}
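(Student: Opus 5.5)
The plan is to write \(\wideM(n)\) as a fixed continuous function of finitely many random inputs. The exact \(M(n)\) is the same function evaluated at the true layer averages, so the continuous mapping theorem finishes the proof. Fix \(n\). The reconstruction uses only the finite vector
\[
  \xi=(\wideA_0,\ldots,\wideA_{N/2-1},\ \wideR_1,\ldots,\wideR_{N/2}),
\]
and its limit is
\[
  \xi^\ast=(\E_0A,\ldots,\E_{N/2-1}A,\ \E_1R,\ldots,\E_{N/2}R).
\]
Here \(\wideA_0=1=\E_0A\) is exact, so it may be treated as a constant.

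First, all coordinates of \(\xi^\ast\) are strictly positive. For \(0\le k<N\), every \(D\in\Om_{n,k}\) has \(\Bn\setminus D\neq\emptyset\), so it has a minimal element, and hence \(A(D)\ge1\). Likewise every \(D\in\Om_{n,k}\) with \(k\ge1\) has \(R(D)\ge1\). Therefore \(\E_kA\ge1\) for \(k<N\) and \(\E_kR\ge1\) for \(k\ge1\). It follows that on the open set \(U=(0,\infty)^{N}\) the map
\[
  \Phi(\xi)=\log\sum_{k=0}^{N}\exp(\widehat x_k(\xi))
\]
is continuous. Here
\[
  \widehat x_k(\xi)=\sum_{j<k}\bigl(\log\xi_{A,j}-\log\xi_{R,j+1}\bigr)
\]
for \(k\le N/2\), and \(\widehat x_k=\widehat x_{N-k}\) otherwise. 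Continuity holds because \(\Phi\) is a finite composition of logarithms, sums and exponentials; the log-sum-exp shift by \(m\) does not change the value.

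Next I identify \(\Phi(\xi^\ast)\). By \cref{thm:layer-ratio-reconstruction}, \(\exp(\widehat x_k(\xi^\ast))=a_n(k)\) for \(k\le N/2\). By \cref{lem:layer-duality}, \(a_n(k)=a_n(N-k)\), which gives the same equality for \(k>N/2\). Hence \(\exp\Phi(\xi^\ast)=\sum_k a_n(k)=M(n)\). So \(\wideM(n)=\exp\Phi(\xi)\) with \(\exp\circ\Phi\) continuous at \(\xi^\ast\).

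Then I conclude. Coordinatewise convergence in probability of finitely many variables implies joint convergence in probability of \(\xi\) to \(\xi^\ast\), by a union bound over the coordinates. The continuous mapping theorem for a function continuous at the constant limit then gives \(\wideM(n)\to M(n)\) in probability. The only subtlety is that \(\xi\) might leave \(U\), for example if \(\wideR_{k}=0\), where \(\Phi\) is undefined. I would handle this by assigning \(\wideM(n)\) an arbitrary value off \(U\). Since \(\Pr(\xi\notin U)\le\sum\Pr(|\xi_i-\xi^\ast_i|\ge1)\to0\), this does not affect convergence. In practice sampled values are already at least \(1\), since \(A,R\ge1\) on the intermediate layers. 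For the almost-sure statement, intersect the finitely many probability-one events on which each coordinate converges. On that event \(\xi(\omega)\to\xi^\ast\) and \(\xi(\omega)\in U\) eventually, so \(\wideM(n)(\omega)\to M(n)\) by continuity.

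The main obstacle is only the bookkeeping around the positivity and domain of \(\Phi\); the rest is a direct application of continuity to a finite-dimensional function.
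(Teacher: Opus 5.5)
Your proposal is correct and follows essentially the same route as the paper: strict positivity of the exact layer means, identification of the exact value through the layer-ratio identity, and the continuous mapping theorem applied to the finite-dimensional reconstruction map. You are somewhat more explicit than the paper in two places that it leaves implicit: you invoke the rank duality $a_n(k)=a_n(N-k)$ to justify the filled-in upper half, and you handle the event that the estimated averages leave $(0,\infty)^N$.
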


\begin{proof}
All exact means appearing in the ratios are strictly positive.  Therefore,
by the continuous mapping theorem,
\[
  \widehat y_k
  =
  \log\wideA_k-\log\wideR_{k+1}
  \longrightarrow
  \log\E_k A-\log\E_{k+1}R
  =
  \log\rho_k
\]
in probability, where the last equality follows from the adjacent-layer
identity in \cref{thm:layer-ratio-reconstruction}.  Since \(n\) is fixed,
there are only finitely many layers, so the estimated log-ratio vector
converges to the exact log-ratio vector.

The reconstruction from log-ratios to log Whitney numbers is a continuous
finite-dimensional map.  With the exact ratios, anchored at \(a_n(0)=1\), it
recovers the exact Whitney numbers and hence
\(\sum_k a_n(k)=M(n)\).  The continuous mapping theorem therefore gives
\(\wideM(n)\to M(n)\) in probability.  If the fixed-layer sample averages converge almost surely, the
same continuity argument gives almost-sure convergence.
\end{proof}

\begin{proposition}[Fixed-\(n\) Monte Carlo scaling]
\label{prop:fixed-n-clt}
Fix \(n\) and a fixed layer-sampling protocol.  Let \(K\) be the set of
adjacent log-ratios estimated by the protocol.  For \(k\in K\), write
\[
  y_k^\ast=\log\frac{a_n(k+1)}{a_n(k)}
  =
  \log\E_k A-\log\E_{k+1}R,
  \qquad
  \widehat y_k=\log\wideA_k-\log\wideR_{k+1}.
\]
Let \(y^\ast=(y_k^\ast)_{k\in K}\) and
\(\widehat y=(\widehat y_k)_{k\in K}\).  Let \(B\) be the total number of
recorded states and assume that finite-chain initialization bias is negligible
and that the sampled layer budgets grow with fixed positive proportions.  Then
the finite-state Markov-chain CLT and the delta method give
\[
  \sqrt B\bigl(\widehat y-y^\ast\bigr)\Longrightarrow
  \mathcal N(0,\Sigma_n).
\]
Here \(\Sigma_n\) is the asymptotic covariance matrix of the estimated
log-ratio vector under the fixed protocol.  Consequently, for the reconstructed
total, there is a finite constant \(V_n\ge0\) such that
\[
  \sqrt B\bigl(\log\wideM(n)-\log M(n)\bigr)
  \Longrightarrow
  \mathcal N(0,V_n).
\]
Consequently,
\[
  \SE(\log\wideM(n))
  =
  C_n B^{-1/2}+o(B^{-1/2}),
  \qquad C_n=\sqrt{V_n}.
\]
\end{proposition}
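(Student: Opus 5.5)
The plan is to establish a joint CLT for the vector of layer means, then push it through two smooth maps using the delta method. Throughout, the chains in distinct layers, and distinct chains within a layer, are run independently. Let \(L\) be the finite set of layers whose means \(\wideA_k\) or \(\wideR_k\) enter \(\widehat y\), excluding the exact endpoint values.

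\textbf{Step 1: a CLT within each layer.} For each \(k\in L\), consider the \(\mathbb R^2\)-valued function \(F=(A,R)\) on the finite state space \(\Om_{n,k}\). By \cref{prop:ergodic} the fixed-layer chain is irreducible and aperiodic, and by \cref{lem:stationarity} it has stationary law \(\mu_{n,k}\). The finite-state Markov-chain CLT therefore applies to each chain: for a chain with \(m\) recorded states,
\[
\sqrt m\,\bigl(\bar F-\E_k F\bigr)\Longrightarrow\mathcal N(0,\Gamma_k),
\]
where \(\Gamma_k\) is the asymptotic covariance (the integrated autocovariance of the thinned chain). Thinning at a fixed stride yields a power of the same ergodic kernel, so it does not affect this. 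The assumption of negligible initialization bias removes the dependence on the starting state; in fact, for a uniformly ergodic finite chain the CLT holds from any initial distribution anyway. Pooling \(C_k\) independent chains with fixed proportions \(m_{k,c}/S_k\to\pi_{k,c}>0\) gives \(\sqrt{S_k}(\wideA_k-\E_kA,\wideR_k-\E_kR)\Longrightarrow\mathcal N(0,\Gamma_k)\), since a convex combination of independent normal limits is normal. If a protocol uses a fixed \(C_k\) with growing chain lengths, this step is direct.

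\textbf{Step 2: joint CLT across layers.} Write \(S_k=\beta_kB\) with fixed \(\beta_k>0\). Because the layers are independent, the stacked vector \(Z_B\) of all layer means satisfies \(\sqrt B(Z_B-z^\ast)\Longrightarrow\mathcal N(0,\Lambda)\), where \(\Lambda=\operatorname{diag}(\Gamma_k/\beta_k)\). If the protocol instead reuses one chain per layer for both \(\wideA_k\) and \(\wideR_k\), that cross-covariance is already contained in \(\Gamma_k\); no independence is needed inside a layer.

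\textbf{Step 3: delta method for \(\widehat y\).} The map \(g(z)=(\log z_{A,k}-\log z_{R,k+1})_{k\in K}\) is \(C^1\) near \(z^\ast\), because all the exact means are strictly positive: \(\E_kA\ge1\) for \(k<N\) and \(\E_kR\ge1\) for \(k>0\). The exact endpoint values \(\wideA_0=1\) and \(\wideR_N=1\) are constants, so they contribute zero variance. \Cref{thm:layer-ratio-reconstruction} gives \(g(z^\ast)=y^\ast\). The delta method then yields
\[
\sqrt B(\widehat y-y^\ast)\Longrightarrow\mathcal N(0,\Sigma_n),\qquad \Sigma_n=J\Lambda J^{\top},
\]
where \(J=Dg(z^\ast)\) has entries \(\pm1/\E_kA\) and \(\pm1/\E_{k+1}R\).

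\textbf{Step 4: delta method for \(\log\wideM(n)\).} The map \(h:y\mapsto\log\sum_{k=0}^N\exp(\widehat x_k(y))\) is smooth, because the \(\widehat x_k\) are linear partial sums of \(y\) (mirrored through \cref{lem:layer-duality}) and log-sum-exp is smooth. By the same argument as \cref{thm:consistency}, \(h(y^\ast)=\log M(n)\). A second application of the delta method gives variance \(V_n=\nabla h^{\top}\Sigma_n\nabla h\ge0\). Explicitly,
\[
\partial h/\partial y_j=\sum_k w_k\,\partial\widehat x_k/\partial y_j,\qquad w_k=a_n(k)/M(n),
\]
which is the weighted fraction of mass lying beyond ratio \(j\), counted on both mirrored halves.

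\textbf{Step 5: the standard-error statement, and the main obstacle.} The final statement \(\SE=C_nB^{-1/2}+o(B^{-1/2})\) should be read as the asymptotic scale of the limiting law: the standard deviation of \(\sqrt B(\log\wideM-\log M)\) tends to \(\sqrt{V_n}\). Convergence of actual second moments needs slightly more than weak convergence. One route is uniform integrability of the rescaled error, which follows from exponential concentration for finite ergodic chains (a Hoeffding bound for Markov chains) together with \(A,R\ge1\). These keep \(\log\wideA_k\) bounded away from singularities except on an event of exponentially small probability. I expect this moment upgrade, together with making the "negligible initialization bias" hypothesis precise, to be the main technical point. The distributional statements themselves follow routinely from Steps 1–4.
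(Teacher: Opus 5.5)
Your proposal is correct and follows the same route as the paper's proof: a per-layer finite-state Markov-chain CLT for the bounded pair \((A,R)\), a joint CLT from independence across chains and layers, and two applications of the delta method (first to the log-ratios, then through the log-sum-exp reconstruction), with \(\E_kA\ge1\) and \(\E_{k+1}R\ge1\) guaranteeing smoothness; you also add detail the paper omits, notably the explicit covariances and the uniform-integrability step needed to pass from weak convergence to the stated \(\SE\) asymptotics. One caveat: in the paper's actual budget growth (pooling more seeds of fixed chain length) your proportions \(m_{k,c}/S_k\) tend to zero, so there the i.i.d.\ CLT across seeds replaces Step~1, and the negligible-initialization-bias hypothesis is genuinely needed rather than redundant as your remark suggests.
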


\begin{proof}
For fixed \(n\), each sampled layer chain is a finite-state irreducible
aperiodic Markov chain with uniform stationary measure.  Because \(A\) and
\(R\) are bounded on the finite state space, the Markov-chain CLT applies to
their time averages.  With layer budgets growing in fixed positive proportions,
independent chains and seeds give a joint CLT for the pooled addable/removable
averages.  Because the exact means \(\E_k A\) and \(\E_{k+1}R\) in
the ratios are strictly positive, the delta method propagates this CLT first
to the adjacent log-ratios and then through the smooth finite-dimensional
reconstruction map \(\Phi:y\mapsto \log\wideM(n)\).
\end{proof}

\paragraph{Numerical protocol.}
\label{sec:numerical-protocol}
All numerical runs use a fixed uniform allocation across sampled layers.
Before a run, we fix the sampled layers, use of duality, number of chains,
recorded states per chain, burn-in, thinning, and seed range.  Each seed
repeats the same procedure and produces one complete reconstruction.  Here
``uniform'' refers to the allocation of effort over sampled layers; within each
layer the Markov chain targets the uniform measure on \(\Om_{n,k}\).
The production-level choices for \(M(10)\), including the sampled layer set,
chain layout, burn-in, thinning, reconstruction rule, and seed-level
uncertainty calculation, are specified in
Appendix~\ref{sec:m10-production-details}.

All reported uncertainties are computed at the seed level.  Internally, a seed produces
\[
  L_s(n)=\log\wideM_s(n)
\]
under the fixed protocol.  For known backtests we report
\[
  e_s(n)
  =
  L_s(n)-\log M(n)
  =\log(\wideM_s(n)/M(n)).
\]
For unknown cases, the same seed-level log estimates give seed standard
errors, percentile bootstrap intervals, jackknife standard errors, and
split-half diagnostics \cite{EfronTibshirani1993}.  These quantify variation
among independent repetitions of the fixed estimator; possible shared
finite-chain bias is probed separately by the burn-in/thinning,
chain-layout, and seed-level mixing checks in
Appendix~\ref{sec:numerical-validation-details}.  Numerical tables and figures report
log errors and standard errors in \(\log_{10}\) units; the theoretical
statements above use natural logarithms.

\section{Numerical Results and Reconstruction Validation}
\label{sec:results}

\subsection{Convergence, Scaling, and Backtests}
\label{sec:convergence-backtests}

The numerical experiments assess the reconstruction of the global count from
layerwise estimates of the mean numbers of addable and removable elements.  In \(\log_{10}\) units, the fixed uniform
estimator follows the expected Monte Carlo error law:
\[
  \SE(\log_{10}\wideM(n))\approx C_n^{(10)} B_{\rm tot}^{-1/2}
\]
where \(B_{\rm tot}\) is the total number of recorded post-burn-in Markov-chain
states, summed over all sampled layers, chains, and pooled seeds.  \cref{fig:m9-convergence} tests this behavior for the
\(M(9)\) production run by pooling increasing seed prefixes and recomputing the
full reconstruction.

\begin{figure}[t]
\centering
\includegraphics[width=0.88\textwidth]{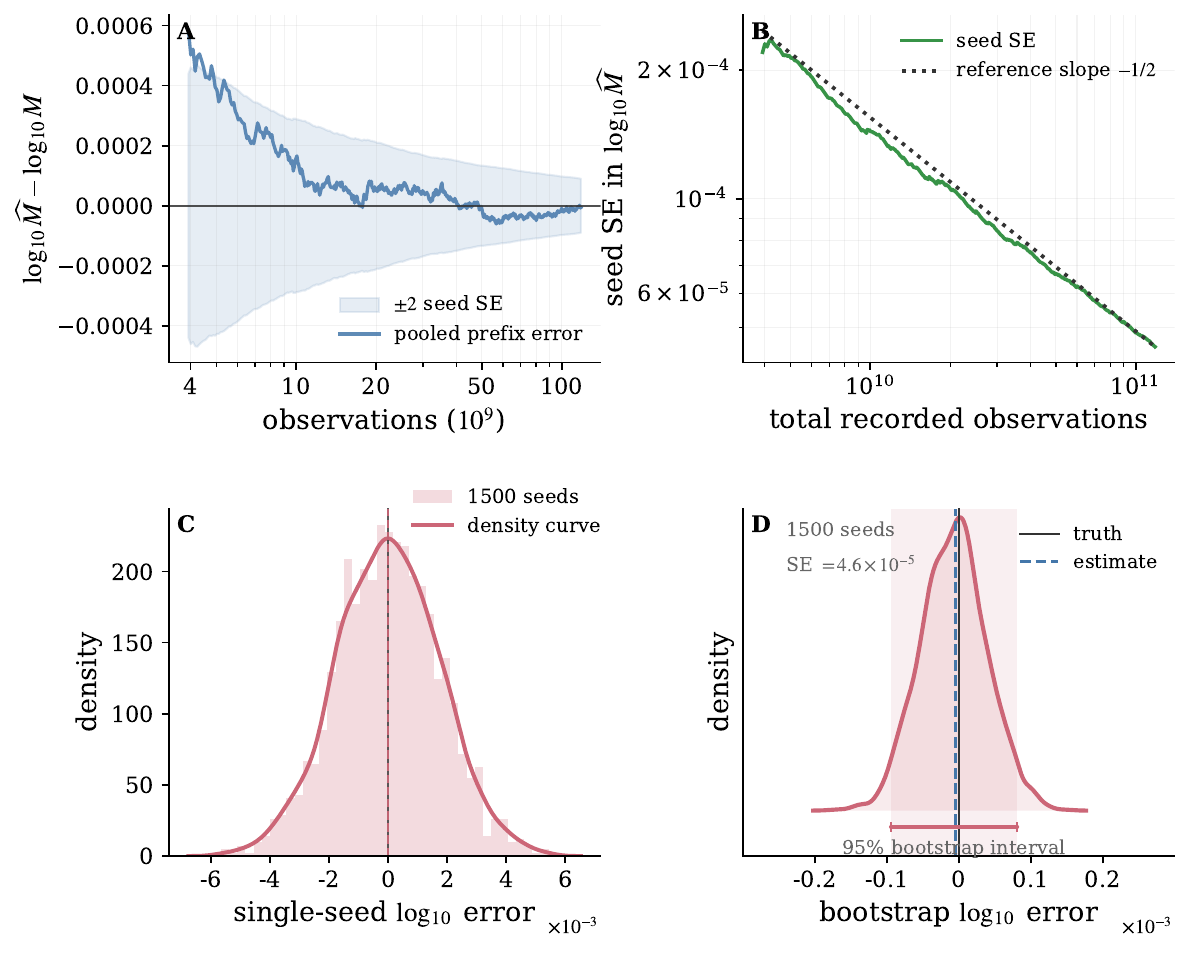}
\caption{\(M(9)\) convergence under seed-prefix pooling for the fixed
production protocol.  Panel A shows the pooled prefix \(\log_{10}\) error with
a \(\pm2\) seed-standard-error band.  Panel B shows the corresponding
\(\log_{10}\) seed standard error against the Monte Carlo reference slope
\(-1/2\).  Panel C shows the distribution of the 1500 single-seed
\(\log_{10}\) errors.  Panel D shows the seed-bootstrap distribution of the
final 1500-seed estimator.}
\label{fig:m9-convergence}
\end{figure}

The cross-\(n\) experiment estimates the dimensional growth of
\(C_n^{(10)}\).  For \(n=6,7,8,9\), we fit
\[
  \log\SE(\log_{10}\wideM(n);B_{\rm tot})=\alpha+\beta n
  -\frac12\log B_{\rm tot}+\varepsilon_{n,B},
\]
with the exponent fixed at the Monte Carlo value \(-1/2\).  The fitted values
are
\[
  \beta=0.620093,
  \qquad
  \exp(\beta)=1.8591,
  \qquad
  \exp(2\beta)=3.4563 .
\]
Thus, at a fixed budget, the seed-level \(\log_{10}\) standard error increases
by a factor of about \(1.86\) per added dimension, whereas maintaining a fixed
standard error requires about \(3.46\) times as many recorded states per added
dimension.  The fitted constant in
\(\log_{10}\) units for
\(n=10\) is \(C_{10}^{(10)}=28.2991\), so an \(M(10)\) run with total
recorded budget \(B_{\rm tot}\) has the seed-level forecast
\[
  \SE(\log_{10}\wideM(10))\approx 28.2991\,B_{\rm tot}^{-1/2}.
\]
This forecast sets the budget scale for \cref{sec:m10-protocol};
\cref{fig:cross-n-scaling} shows the fitted curves.

\begin{figure}[t]
\centering
\includegraphics[width=0.88\textwidth]{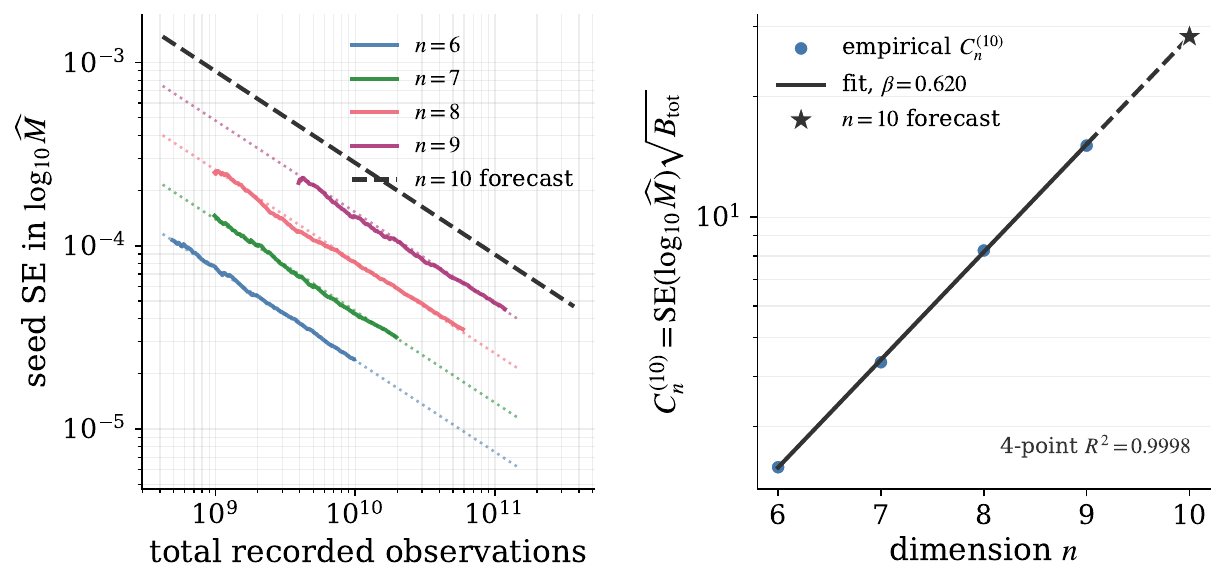}
\caption{Cross-\(n\) scaling of the seed-level \(\log_{10}\) variability for
\(n=6,7,8,9\), together with the fitted \(n=10\) forecast.  The fit constrains
the budget exponent to the Monte Carlo value \(-1/2\) and estimates the
remaining growth with \(n\).}
\label{fig:cross-n-scaling}
\end{figure}

The high-precision known-value runs give the finite-\(n\) benchmark points in
\cref{tab:high-precision-backtests}.  Their \(\log_{10}\) errors lie within the independently estimated
seed-level variability.  The full Monte Carlo
parameters are collected in \cref{tab:mc-parameters}.
For context, direct finite-\(n\) substitution of published asymptotic formulae
is reported in \cref{tab:asymptotic-comparison}.

\begin{table}[t]
\centering
\caption{High-precision known-value backtests.  Chains/layer is the actual
number of Markov chains run on each sampled layer.  Errors and standard errors
are in \(\log_{10}\) scale.}
\label{tab:high-precision-backtests}
\scriptsize
\setlength{\tabcolsep}{3.2pt}
\begin{tabular}{@{}rrrrrrr@{}}
\toprule
\(n\) & Seeds & Chains/layer
& \(\log_{10}\wideM(n)\) & \(\log_{10}\) err. & \(\log_{10}\) SE & \(z\) \\
\midrule
8 & 3000 & 2048 & \(22.749197804918\)
& \(-6.2026\times10^{-7}\) & \(3.5028\times10^{-5}\) & \(-0.02\) \\
9 & 1500 & 4096 & \(41.456947495132\)
& \(-5.1646\times10^{-6}\) & \(4.5119\times10^{-5}\) & \(-0.11\) \\
\bottomrule
\end{tabular}
\end{table}

\newpage
\subsection[Protocol estimate for M(10)]{Protocol estimate for \(M(10)\)}
\label{sec:m10-protocol}

The same estimator under the stated protocol can be applied to \(M(10)\), where
no exact value is available.  The known cases \(n\le9\) calibrate the budget
scale through the cross-\(n\) scaling experiment;
Appendix~\ref{sec:m10-production-details} records the layer set, chain layout,
burn-in, thinning, and reconstruction rule.  The resulting protocol estimate
is reported in \cref{tab:m10-production-result}.

\begin{table}[t]
\centering
\caption{\(M(10)\) protocol estimate and variability summary.}
\label{tab:m10-production-result}
\small
\begin{tabular}{@{}p{0.33\textwidth}p{0.20\textwidth}p{0.36\textwidth}@{}}
\toprule
Quantity & Value & Notes \\
\midrule
Total recorded states & \(314{,}572{,}800{,}000\) & production run total \\
Number of seeds & \(1000\) & independent production seeds \\
\(\log_{10}\wideM(10)\) & \(78.951142528342\) & primary estimate \\
\(\wideM(10)\) & \(8.9360\times 10^{78}\) & value-scale rendering \\
Seed SE for \(\log_{10}\wideM(10)\) & \(5.3089\times 10^{-5}\) & seed-level variability \\
Bootstrap interval & \(78.951143\pm 1.04\times10^{-4}\) & \(95\%\) \(\log_{10}\)-scale interval \\
Jackknife SE & \(5.3089\times 10^{-5}\) & stability check \\
Split-half difference & \(6.7109\times 10^{-5}\) & absolute \(\log_{10}\)-scale difference \\
Cross-\(n\) budget forecast & \(78.951143\pm5.0456\times10^{-5}\) & forecast from the fitted scaling law \\
\bottomrule
\end{tabular}
\end{table}

The seed and jackknife standard errors and the bootstrap interval in
\cref{tab:m10-production-result} quantify uncertainty arising from
repeat-to-repeat variability under the fixed protocol; they do not account for
systematic effects shared across seeds.  The bootstrap
half-width \(1.04\times10^{-4}\) on the \(\log_{10}\) scale corresponds to a
multiplicative half-width of about \(2.4\times10^{-4}\).  Separately, applying
the fitted cross-\(n\) scaling law
\(\SE(\log_{10}\wideM(10))\approx28.2991\,B_{\rm tot}^{-1/2}\) to the reported
production budget gives
\(\SE(\log_{10}\wideM(10))\approx5.0456\times10^{-5}\).  As a computational
scale reference, a 1000-seed run at about 30 minutes per seed on one RTX 5080
corresponds to roughly 500 RTX 5080 GPU-hours.  The cross-\(n\) quantity is an extrapolative budget
forecast based on lower-dimensional runs.

\subsection{Whitney-Number Shape}
\label{sec:layer-density-shape}

The estimator reconstructs the Whitney numbers, not only the total sum
\(M(n)\).  Here
\[
  a_n(k)=\#\{D\subseteq\Bn:\ D\text{ is a downset and }|D|=k\}
\]
is the \(k\)-th Whitney number of the ideal lattice \(\mathcal I(\Bn)\),
ranked by ideal cardinality.
With the two endpoint ideals omitted, the same rows appear in OEIS A269699 and
are described there as empirically unimodal \cite{OEISA269699}.  The shape of
this Whitney-number sequence is also connected to open Sperner-theoretic questions for
\(\mathcal I(\Bn)\), including unimodality,
RUSS, Peck-type consequences, and symmetric chain decompositions
\cite{McHard2009,Engel1997,GreeneKleitman1976,ProctorSaksSturtevant1980}.

\begin{figure}[t]
\centering
\includegraphics[width=0.88\textwidth]{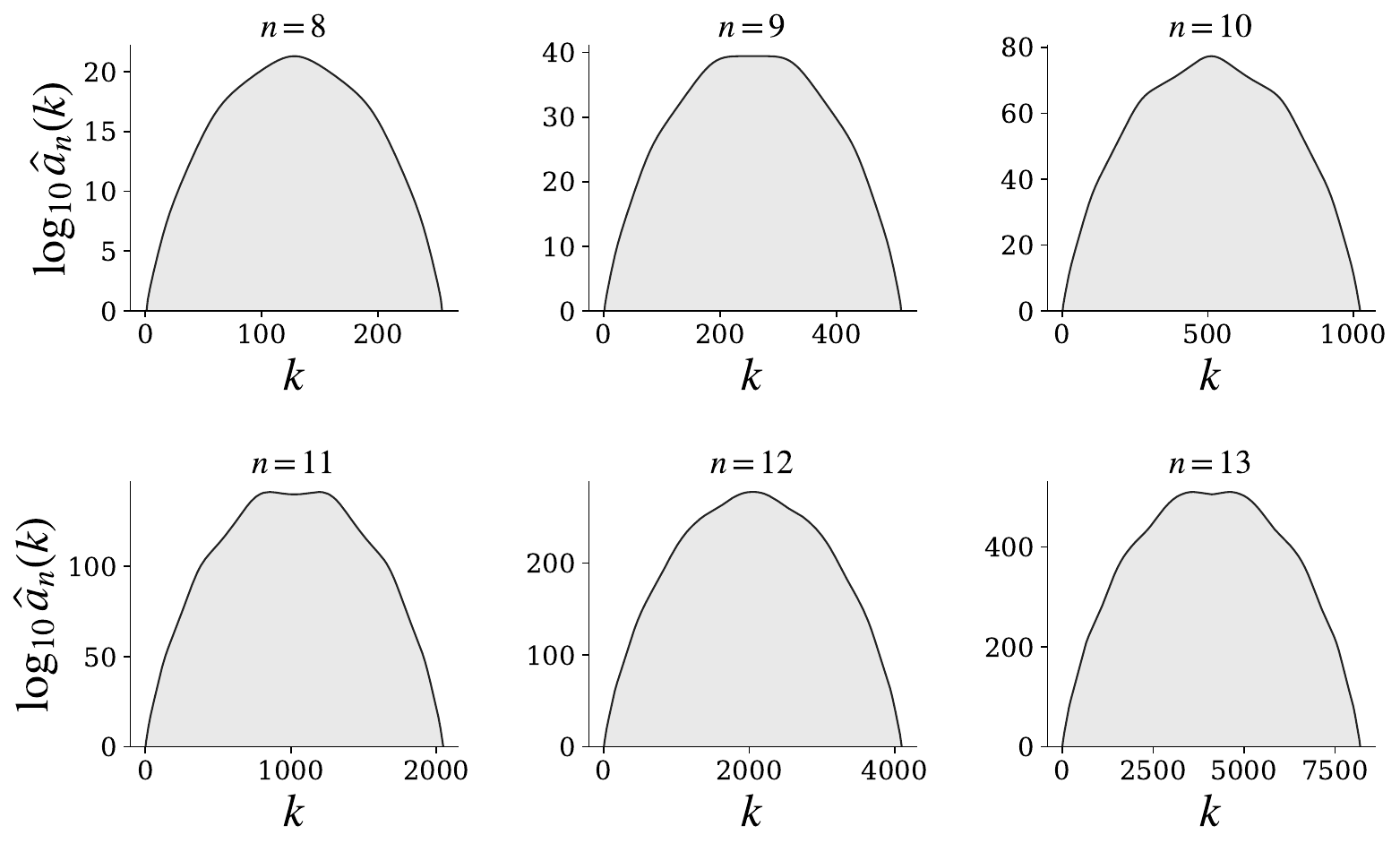}
\caption{Estimated Whitney-number sequences for \(n=8,\ldots,13\), displayed as
\(\log_{10}\widea_n(k)\).  The \(n=8\) and
\(n=9\) panels are high-precision backtest estimates, \(n=10\) is the
protocol estimate, and \(n=11,12,13\) are center-window estimates.}
\label{fig:layer-density-n8-to-n13}
\end{figure}

\cref{fig:layer-density-n8-to-n13} shows the estimated Whitney-number sequences for
\(n=8,\ldots,13\).  The odd cases display a center valley between two
symmetric shoulders.

\begin{figure}[t]
\centering
\includegraphics[width=0.88\textwidth]{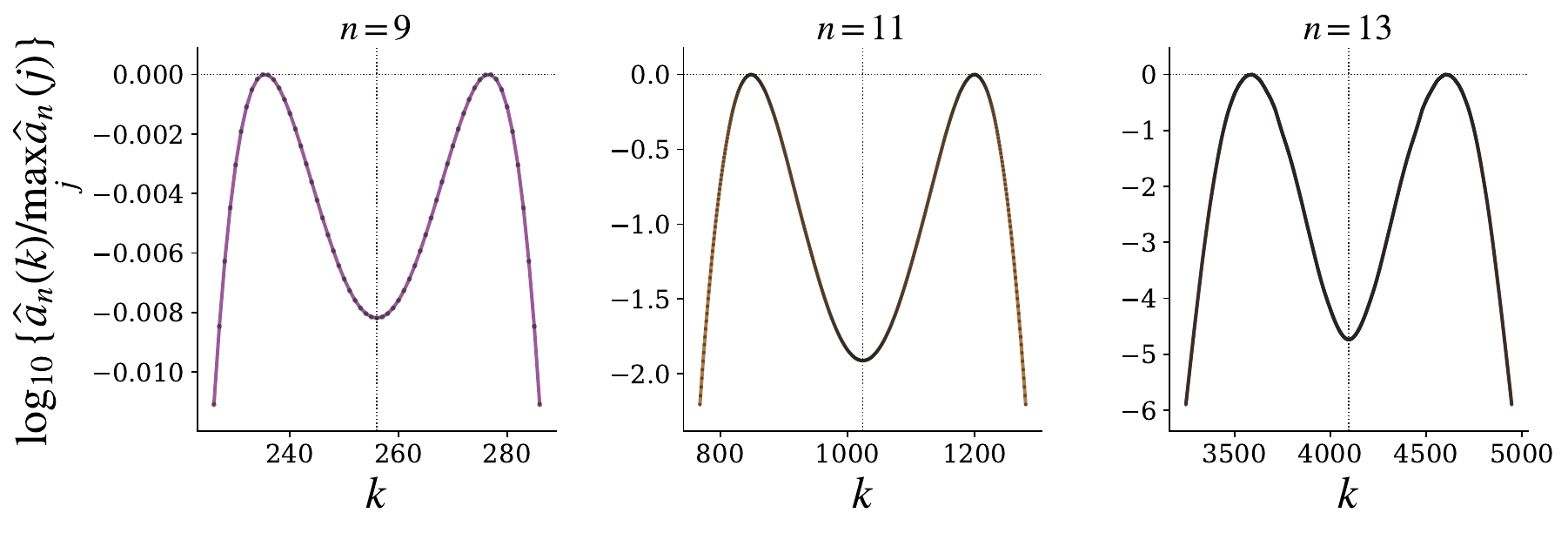}
\caption{Center zooms for the odd cases \(n=9,11,13\), displayed as
\(\log_{10}\{\widea_n(k)/\max_j\widea_n(j)\}\) within each plotted window.
The \(n=9\) panel comes from the high-precision
estimate and shows a shallow but resolved double shoulder:
the sampled shoulder occurs at \(k=235\), its dual is \(k=277\), and the
shoulder-to-center ratio is about \(1.019\).  The \(n=11\) and \(n=13\) panels
are center-window estimates and show an analogous center-valley
pattern with larger contrast.}
\label{fig:odd-center-bimodality}
\end{figure}

For \(n=9\), the feature is small but numerically separated in the seed-level
contrast analysis.  The estimated Whitney-number sequence has symmetric shoulders at
\(k=235\) and \(k=277\).  The independent sampled contrast is
\[
  \log_{10}\widea_9(235)-\log_{10}\widea_9(256)
  =
  0.008184090729,
\]
with seed SE \(9.31\times10^{-6}\), bootstrap 95\% interval
\([0.008166,0.008202]\), and shoulder-to-center ratio \(1.019023\).
The same high-precision run has \(\log_{10}\) error \(-5.1646\times10^{-6}\) for
\(M(9)\).  The estimated \(n=9\) profile therefore
has two symmetric shoulders above the center and is inconsistent with the
empirical unimodality description in OEIS A269699.  Because RUSS, Peck, and symmetric-chain
properties imply rank-unimodality, an exact confirmation of this center valley
would also rule out those stronger rank-unimodal patterns for
\(\mathcal I(\Bn)\).

The \(n=11\) and \(n=13\) panels in \cref{fig:odd-center-bimodality} show
the same picture in the plotted windows.  The shoulder-to-center ratios in
the estimated windows are approximately \(81.6\) for \(n=11\) and
\(5.46\times 10^4\) for \(n=13\).

\section{Discussion}

The method provides a finite-\(n\) Monte Carlo reconstruction that complements
exact enumeration.  It recovers the Whitney-number profile from local
adjacent-layer ratios and sums that profile to estimate \(M(n)\).  Asymptotic and cluster-expansion formulae remain
important reference points for the scale of \(M(n)\), but they do not by
themselves provide a data-driven uncertainty estimate for a specific unknown
finite case.

Although the reconstruction is a product of adjacent ratios on the original
scale, the calculation is carried out in log space.  Local ratio errors
therefore accumulate additively in the reconstructed log Whitney numbers.  To
first order,
\[
  \operatorname{Var}(\log\wideM(n))\approx s^\top\Sigma s,
\]
where \(s\) is the sensitivity of the log-sum-exp reconstruction to the
adjacent log-ratios and \(\Sigma\) is their Monte Carlo covariance.  The
known-value tests and cross-\(n\) scaling measure this accumulated error after
reconstruction.

The two-shoulder structure  in odd dimensional cases is a counterintuitive feature of the reconstructed
Whitney-number profile. Rank-unimodality of this sequence is an empirical
description in this setting, not a consequence of the exact rank duality. The
duality $a_n(k)=a_n(2^n-k)$ only enforces symmetry of the profile about the
center. It does not require the central coefficient $a_n(2^{n-1})$ to be
maximal. Therefore a symmetric two-shoulder profile, with a center valley
between two equal off-center shoulders, is compatible with the duality even
though it contradicts unimodality. A mathematical explanation of this structure remains
open; the evidence presented here is numerical and restricted to finite
\(n\).

The observed cross-dimensional scaling concerns the prefactor in the fixed-\(n\)
Monte Carlo error law.  For each fixed \(n\), the finite-state Markov-chain CLT
and the delta method give
\[
  \SE(\log \wideM(n))=C_n B^{-1/2}+o(B^{-1/2}),
\]
where \(B\) is the recorded-state budget.  The cross-\(n\) fit estimates how
the constant \(C_n\) changes with dimension.  In the tested dimensions this
change appears regular: at fixed recorded-state budget, the seed-level log
standard error grows by about a factor \(1.86\) per added dimension.  We use this
empirical regularity as a budget forecast for \(C_n\).  One possible
explanation is that the final log-sum-exp is most sensitive to the part of the
reconstructed Whitney-number profile carrying the largest mass, so the
effective error propagation may be governed by a relatively narrow central
range of adjacent ratios.  If the local addable/removable statistics and chain
autocorrelations in that range change regularly with \(n\), then the fitted
prefactor \(C_n\) can also vary regularly.  This explanation is heuristic.

The known-value backtests at \(M(8)\) and \(M(9)\) are consistent with the
measured seed-level variability, and the cross-\(n\) experiment gives a
practical budget scale for \(M(10)\).  The principal limitation is that the
combinatorial identity determines the exact target ratios but does not
guarantee the finite-run accuracy of their Monte Carlo estimates.  Finite-run mixing, autocorrelation, and
error propagation through the reconstructed log Whitney numbers therefore
remain empirical diagnostics.

\section{Conclusion}

This work reformulates Dedekind-number estimation as the reconstruction of the
Whitney numbers of \(\mathcal I(\Bn)\) across fixed-cardinality downset layers.
Rather than sampling the total set of monotone Boolean functions directly, the
method estimates local averages of the numbers of addable and removable
elements on each layer.  The adjacent-layer double-counting identity converts
these averages into the ratios \(a_n(k+1)/a_n(k)\), and log-space accumulation
together with Boolean-lattice duality gives the full reconstructed profile
\((\widea_n(k))_{k=0}^{2^n}\).  The Dedekind-number estimate \(\wideM(n)\) is
then obtained by summing this profile.

Under the fixed protocol, the \(n=8\) and \(n=9\) estimates agree with the known
Dedekind numbers within the measured seed-level variability and exhibit the
expected Monte Carlo scaling.  For
\(n=10\), the same protocol gives
\[
  \wideM(10)=(8.9360\pm0.0010)\times10^{78},
\]
where the displayed uncertainty is the value-scale expression of the
cross-\(n\) budget forecast.  The reconstruction also supplies rank-shape
information.  For \(n=9\), the Whitney-number profile has two symmetric
shoulders above the center, contrary to the empirical unimodality description
in OEIS A269699.  The \(n=11\) and \(n=13\) center-window estimates show the
same center-valley pattern with larger contrast.

\section*{Data Availability}

Code for reproducing the reported reconstructions are available at \url{https://github.com/mitotic0124/DedekindLayerMC}.

\section*{Acknowledgments}

The authors thank Ruiqing Xia, Yun Zhu, Shang Xiang, and Lan-Xi Tang for
helpful discussions and feedback.

\bibliographystyle{alphaurl}
\bibliography{references}

\appendix
\label{appendix}
\section{Protocol and Numerical Validation Details}
\label{sec:numerical-validation-details}

This appendix documents the \(M(10)\) production protocol and the numerical
tests underlying the empirical validation reported in the main text.

\subsection{\(M(10)\) Production Protocol}
\label{sec:m10-production-details}

This subsection records the fixed statistical protocol used for the reported
\(M(10)\) estimate.  It specifies the sampled layers, chain layout, reconstruction
map, and seed-level summaries; it is not meant to describe file formats,
paths, or other engineering details of the computation.

\begin{protocol}[\(M(10)\) production protocol]
\label{prot:m10-appendix}
Set \(n=10\) and \(N=2^{10}=1024\).  One production seed means one independent
repetition of the following fixed protocol.

\begin{enumerate}
  \item \textbf{Sampled layers and duality.}
        The sampled half-row consists of the layers
        \(k=1,\dots,512\).  The endpoint layer \(k=0\) is exact, and the
        remaining layers \(k=513,\dots,1024\) are filled by the exact Boolean
        duality
        \[
          a_{10}(k)=a_{10}(1024-k).
        \]
        Thus the reconstruction uses sampled information only on one side of
        each dual pair, together with the exact endpoint value \(a_{10}(0)=1\).

  \item \textbf{Fixed-layer chain layout.}
        For each sampled layer, the protocol runs
        \(4\times2048=8192\) fixed-layer chains.  Each chain records \(75\)
        post-burn-in states, using burn-in \(2500\) and thinning \(40\).
        Each chain is initialized by starting from the empty downset and adding
        uniformly chosen addable vertices until the target layer \(k\) is
        reached.

  \item \textbf{Fixed-layer transition.}
        Within a sampled layer, the Markov step is the exchange proposal from
        \cref{sec:fixed-layer-sampling}: delete a uniformly chosen removable
        vertex, add a uniformly chosen addable vertex, and accept the proposed
        state \(\Gamma\) from the current state \(D\) with
        \[
          \alpha(D,\Gamma)=\min\{1,R(D)/R(\Gamma)\}.
        \]
        Independent seeds use a fixed deterministic seed convention and
        independent random streams.

  \item \textbf{Recorded addable/removable averages.}
        For each recorded state \(D\), compute the addable count \(A(D)\) and
        removable count \(R(D)\).  For a fixed seed \(s\), the chains on layer
        \(k\) are pooled to form
        \[
          \wideA_{k,s}
          =
          \frac{1}{S_{k,s}}
          \sum_{c,t} A(D_{k,c,t}^{(s)}),
          \qquad
          \wideR_{k,s}
          =
          \frac{1}{S_{k,s}}
          \sum_{c,t} R(D_{k,c,t}^{(s)}),
        \]
        where \(D_{k,c,t}^{(s)}\in\Omega_{10,k}\) is the \(t\)-th recorded state
        of chain \(c\) in seed \(s\), and \(S_{k,s}\) is the number of recorded
        states pooled on that layer.  The implementation also records second
        moments, acceptance/change rates, and block summaries for the validation
        checks in \ref{sec:appendix-validation-checks}.

  \item \textbf{Seed-level reconstruction.}
        For each seed \(s\), set the exact endpoint mean
        \(\wideA_{0,s}=1\).  Form the adjacent log-ratios on the sampled side:
        \[
          \widehat y_{k,s}
          =
          \log \wideA_{k,s}-\log \wideR_{k+1,s},
          \qquad 0\le k<512 .
        \]
        Accumulate log Whitney numbers from the endpoint:
        \[
          x_{0,s}=0,
          \qquad
          x_{k,s}=\sum_{j=0}^{k-1}\widehat y_{j,s}
          \quad (1\le k\le512).
        \]
        Complete the full row by exact rank duality:
        \[
          \widehat x_{k,s}
          =
          \begin{cases}
            x_{k,s}, & 0\le k\le512,\\
            x_{1024-k,s}, & 512<k\le1024.
          \end{cases}
        \]
        The seed-level reconstructed Whitney numbers are
        \[
          \widea_{10,s}(k)=\exp(\widehat x_{k,s}).
        \]
        The seed-level Dedekind-number estimate is computed by log-sum-exp:
        \[
          L_s
          =
          \log\wideM_s(10)
          =
          m_s+\log\sum_{k=0}^{1024}\exp(\widehat x_{k,s}-m_s),
          \qquad
          m_s=\max_{0\le k\le1024}\widehat x_{k,s}.
        \]
        No fitted smoothing weights or penalty parameters are introduced in this
        reconstruction.

  \item \textbf{Seed-level summary and uncertainty calculation.}
        The reported primary estimate is obtained from the seed-level log
        estimates \(L_s=\log\wideM_s(10)\), weighting each seed by its recorded
        state count.  The seed standard error is the corresponding weighted
        mean standard error.  The bootstrap resamples seeds with replacement
        using the same weights; the jackknife leaves out one seed at a time and
        recomputes the weighted mean; the split-half diagnostic randomly
        partitions the seed list into two halves with a fixed random seed and
        reports the difference of the two weighted means.
\end{enumerate}
\end{protocol}

\subsection{Numerical Validation Checks}
\label{sec:appendix-validation-checks}

This subsection collects the numerical parameter summary and validation checks
other than the \(M(10)\) production protocol: finite-\(n\) asymptotic
comparison, exact small-\(n\) enumeration, truth-free reconstruction closure,
burn-in/thinning drift, production-scale mixing diagnostics, and
chain-structure invariance.  When the diagnostic summaries are produced in
natural-log units, the table entries below are converted to \(\log_{10}\)
units.

\begin{center}
\refstepcounter{table}
\label{tab:mc-parameters}
\footnotesize
\textsc{Table \thetable.}\enspace
Monte Carlo parameter summary for the numerical experiments.  Here
\(c\) is the actual number of Markov chains run on each sampled layer, and
\(m\) is the number of recorded states per chain.
\par\medskip
\centering
\scriptsize
\renewcommand{\arraystretch}{1.25}
\setlength{\tabcolsep}{2.5pt}
\resizebox{\textwidth}{!}{%
\begin{tabular}{@{}ccccccc@{}}
\toprule
Experiment & Layers & \(c\) & \(m\) & burn-in & thin.
& Seeds \\
\midrule
\(M(8)\) high precision / convergence
& \(n=8\), duality, 128 sampled
& 2048 & 75 & 2500 & 40
& 3000 \\
\(M(9)\) high precision
& \(n=9\), duality, 256 sampled
& 4096 & 75 & 2500 & 40
& 1500 \\
\(M(10)\) production estimate
& \(n=10\), duality, 512 sampled
& 8192 & 75 & 2500 & 40
& 1000 \\
Cross-\(n\) scaling
& \(n=6,7,8,9\), duality
& see note & 75
& 2500 & 40
& \(1000,1000,3000,1500\) \\
Truth-free closure
& \(n=8\), all nontrivial layers
& 256 & 75 & 2500 & 40
& 20 \\
Burn-in/thinning drift
& \(n=8\), duality, 128 sampled
& 256 & 75
& \(2500,5000,10000\)
& \(40,80,120\)
& \(40,40,40\) \\
Chain-layout invariance
& \(n=8\), duality, 128 sampled
& see note & see note & 2500 & 40
& 20 per layout \\
\bottomrule
\end{tabular}
}
\renewcommand{\arraystretch}{1}
\par\smallskip
\footnotesize
For the cross-\(n\) scaling experiment, the chains per sampled layer for
\(n=6,7,8,9\) are \(4096,4096,2048,4096\), respectively; the budget variable is
the total recorded observations obtained by adding independent seeds.  The
chain-layout rows use three equal-budget layouts:
\(256\times75\), \(64\times300\), and \(16\times1200\)
chains by recorded states per chain.
\end{center}

\noindent\textbf{Finite-\(n\) asymptotic comparison.}\par\smallskip
The formula of Korshunov and Sapozhenko, in the normalization used by
Jenssen, Malekshahian and Park, is the \(j\le2\) truncation of their
cluster-expansion expression
\cite{Korshunov1977,Korshunov2003,JenssenMalekshahianPark2024}.  The same
paper gives the next polynomial coefficients \(P_3^r\) and \(P_4^r\), so one
can also form \(j\le3\) and \(j\le4\) cluster-expansion truncations.  Because these formulae are asymptotic rather than finite-\(n\) numerical
guarantees, \cref{tab:asymptotic-comparison} reports their direct finite-\(n\)
evaluations at the largest dimensions with known values.

\begin{center}
\refstepcounter{table}
\label{tab:asymptotic-comparison}
\footnotesize
\textsc{Table \thetable.}\enspace
Direct finite-\(n\) substitution of published asymptotic formulae,
with the corresponding \(\log_{10}\) errors from this work where a
high-precision known-value backtest is reported.  Errors are relative to the
exact \(M(n)\).
The \(j\le2\) row is the Korshunov--Sapozhenko approximation, equivalently the
first two cluster-expansion terms in the normalization of
\cite{JenssenMalekshahianPark2024}.
\par\medskip
\centering
\scriptsize
\setlength{\tabcolsep}{2.5pt}
\resizebox{\textwidth}{!}{%
\begin{tabular}{@{}rlrrrrr@{}}
\toprule
\(n\) & Formula & Exact
& Asymp.
& Asymp. err.
& This work
& This work err. \\
& & \(\log_{10}M(n)\)
& \(\log_{10}\widetilde M(n)\)
& \(\log_{10}(\widetilde M/M)\)
& \(\log_{10}\wideM(n)\)
& \(\log_{10}(\wideM/M)\) \\
\midrule
7 & KS / CE \(j\le2\) & \(12.382859952093\) & \(12.363048150434\) & \(-1.9812\times10^{-2}\) & -- & -- \\
7 & CE \(j\le3\)     &                       & \(12.488696845908\) & \(1.0584\times10^{-1}\) &    &    \\
7 & CE \(j\le4\)     &                       & \(12.576910945211\) & \(1.9405\times10^{-1}\) &    &    \\
8 & KS / CE \(j\le2\) & \(22.749198425175\) & \(22.687132301056\) & \(-6.2066\times10^{-2}\) & \(22.749197804918\) & \(-6.2026\times10^{-7}\) \\
8 & CE \(j\le3\)     &                       & \(22.717315202061\) & \(-3.1883\times10^{-2}\) &    &    \\
8 & CE \(j\le4\)     &                       & \(22.731045948010\) & \(-1.8152\times10^{-2}\) &    &    \\
9 & KS / CE \(j\le2\) & \(41.456952659651\) & \(40.947270539746\) & \(-5.0968\times10^{-1}\) & \(41.456947495132\) & \(-5.1645\times10^{-6}\) \\
9 & CE \(j\le3\)     &                       & \(41.115843278861\) & \(-3.4111\times10^{-1}\) &    &    \\
9 & CE \(j\le4\)     &                       & \(41.228984400789\) & \(-2.2797\times10^{-1}\) &    &    \\
\bottomrule
\end{tabular}
}
\par\smallskip
\footnotesize
Blank entries repeat the exact value or this-work estimate shown on the first
row for the same \(n\).
\end{center}

\noindent\textbf{Exact small-\(n\) enumeration.}\par\smallskip
As an independent implementation check before using MCMC, exact enumeration of
all downsets of \(\Bn\) for \(0\le n\le7\) verified the adjacent-edge
double-counting identity and deterministic reconstruction, with the known
complete Whitney-number rows through \(n=7\) used as an external reference
\cite{OEISA269699}.

\noindent\textbf{Truth-free reconstruction closure.}\par\smallskip
The full-layer \(n=8\) closure runs checked the endpoint identity
\[
  \sum_{k=0}^{N-1}\log \rho_k=0.
\]
The estimated closure residual
\[
  C=\sum_{k=0}^{N-1}\log\widehat\rho_k
\]
was statistically compatible with zero; the largest standardized mean
residual over the tested budgets was \(1.78\).

\noindent\textbf{Burn-in and thinning drift.}\par\smallskip
The burn-in/thinning comparison repeats the same fixed \(n=8\) protocol with
the same 40 seeds in each setting.  The paired differences in
\cref{tab:burnin-thinning-drift} show no monotone drift; the largest paired
standardized shift is \(1.68\).

\begin{table}[t]
\centering
\caption{Paired burn-in/thinning drift diagnostic at \(n=8\).  Entries compare
mean \(\log_{10}\) errors using the same 40 seeds in each setting.  Here
\((b,t)\) is the burn-in/thinning pair.}
\label{tab:burnin-thinning-drift}
\scriptsize
\begin{tabular}{lrrrr}
\toprule
Comparison & Mean diff. & SE & \(z\) & Max abs. diff. \\
\midrule
\((5000,80)-(2500,40)\)
& \(-1.7455\times10^{-3}\) & \(1.3538\times10^{-3}\)
& \(-1.29\) & \(1.7668\times10^{-2}\) \\
\((10000,120)-(2500,40)\)
& \(4.6248\times10^{-4}\) & \(1.0337\times10^{-3}\)
& \(0.45\) & \(1.9154\times10^{-2}\) \\
\((10000,120)-(5000,80)\)
& \(2.2080\times10^{-3}\) & \(1.3151\times10^{-3}\)
& \(1.68\) & \(1.9773\times10^{-2}\) \\
\bottomrule
\end{tabular}
\end{table}

\noindent\textbf{Seed-level mixing diagnostic at \(n=10\).}\par\smallskip
As a production-level mixing diagnostic, we reprocessed 100 independent
\(M(10)\) seeds selected from the diagnostic rerun.  For each seed and sampled
layer, the post-burn-in recorded states were split into five consecutive
blocks; within each block, chains were grouped into 16 chain groups.  We
compared block ranges after averaging over chain groups and chain-group ranges
within each block.  The summaries in \cref{tab:m10-mixing-diagnostic} show high
acceptance and change rates, with no systematic block drift or persistent
chain-group separation at the scale of the recorded addable/removable means.

\begin{table}[t]
\centering
\caption{\(M(10)\) seed-level mixing diagnostic over 100 seeds selected from
the diagnostic rerun.
Mean averages all recorded seed-layer-block-chain-group summaries.  Block
med. and Block 95\% summarize, over seed-layer pairs, the range across the
five post-burn-in block means after averaging over chain groups.  Group med.
and Group 95\% summarize, over seed-layer-block triples, the range across the
16 chain groups.}
\label{tab:m10-mixing-diagnostic}
\scriptsize
\begin{tabular}{lrrrrr}
\toprule
Metric & Mean & Block med. & Block 95\% & Group med. & Group 95\% \\
\midrule
Acceptance rate
& \(0.990863\) & \(8.14\times10^{-5}\) & \(2.00\times10^{-4}\)
& \(5.01\times10^{-4}\) & \(1.16\times10^{-3}\) \\
Change rate
& \(0.973761\) & \(1.40\times10^{-4}\) & \(3.04\times10^{-4}\)
& \(8.50\times10^{-4}\) & \(1.73\times10^{-3}\) \\
\(\overline A\)
& \(83.6393\) & \(6.10\times10^{-2}\) & \(1.48\times10^{-1}\)
& \(3.86\times10^{-1}\) & \(8.12\times10^{-1}\) \\
\(\overline R\)
& \(64.8802\) & \(4.06\times10^{-2}\) & \(1.04\times10^{-1}\)
& \(2.64\times10^{-1}\) & \(5.90\times10^{-1}\) \\
\bottomrule
\end{tabular}
\end{table}

\noindent\textbf{Chain-structure invariance.}\par\smallskip
At fixed per-layer budget, we compared the three chain layouts in
\cref{tab:chain-layout}.  The mean \(\log_{10}\) errors stay within
seed-level noise, and the largest pairwise \(z\)-score is about \(0.87\).

\begin{table}[t]
\centering
\caption{Chain-layout invariance at \(n=8\).  The per-layer recorded-state
budget is held fixed while the number and length of chains are changed.  Error
columns are in \(\log_{10}\) units.}
\label{tab:chain-layout}
\scriptsize
\setlength{\tabcolsep}{3pt}
\begin{tabular}{lrrrrrrr}
\toprule
Layout & Chains & States/chain & States/layer & Seeds
& Mean & SD & SE \\
\midrule
many short & 256 & 75   & 19200 & 20
& \(-0.0003220\) & \(0.005350\) & \(0.001196\) \\
balanced   & 64  & 300  & 19200 & 20
& \(-0.0006383\) & \(0.005968\) & \(0.001334\) \\
few long   & 16  & 1200 & 19200 & 20
& \(0.0008882\) & \(0.005056\) & \(0.001131\) \\
\bottomrule
\end{tabular}
\end{table}

\end{document}